\documentclass[11pt]{amsart}

\usepackage{amsmath,amsfonts,amssymb,latexsym}
% ---- si en franÃ§ais ----
%\usepackage[latin1]{inputenc}
%\usepackage[T1]{fontenc}
%\usepackage[french]{babel}
%\frenchspacing

\usepackage[kerning=true]{microtype} % babel=true si en franÃ§ais
\usepackage{graphicx}
\usepackage{wrapfig}

\usepackage{stmaryrd}

\theoremstyle{theorem}

\newtheorem{theorem}{Theorem}

\newtheorem{lemma}{Lemma}

\newtheorem{corollary}{Corollary}

\newtheorem{claim}{Claim}

\theoremstyle{definition}

\newtheorem{definition}{Definition}

\newcommand{\sys}{{\rm sys}}

\newcommand{\R}{{\mathbb R}}

\newcommand{\Z}{{\mathbb Z}}

\newcommand{\A}{\mathcal A}

\numberwithin{equation}{section}
\numberwithin{theorem}{section}
\numberwithin{corollary}{section}
\numberwithin{definition}{section}
\numberwithin{lemma}{section}
\numberwithin{claim}{section}

\title{Measurements of Riemannian two-disks and two-spheres}

\author{F. Balacheff}

\address{F. Balacheff, Laboratoire Paul Painlev\'e, Bat. M2, Universit\'e des Sciences et Technologies,
59 655 Villeneuve d'Ascq, France.}

\email{florent.balacheff@math.univ-lille1.fr}

\keywords{Curvature-free inequalities, Bers constant, closed geodesic, isoperimetric inequalities, width}

\subjclass{53C23}

%%%%%%%%%%%%%%%%%
\begin{document}%
%%%%%%%%%%%%%%%%%

\begin{abstract}
We prove that any Riemannian two-sphere with area at most $1$ can be continuously mapped onto a tree in a such a way that the topology of fibers is controlled and their length is  less than $7.6$. This result improves previous estimates and relies on a similar statement for Riemannian two-disks. 
\end{abstract}

\maketitle

\section{Introduction}

In this article we are interested to describe the possible geometries of Riemannian two-disks and two-spheres in the same way a taylor determines the geometry of a body: by taking some relevant measurements. We denote by $A(\cdot)$ the area functionnal and $\vert \cdot \vert$ the length functionnal.
Our main result deals with measurements of two-disks and states as follows.

\begin{theorem}\label{th:maindisk}
If $D$ is a Riemannian two-disk, then for any $\epsilon >0$ we can find a continuous map to a trivalent tree such that the preimage of a terminal vertex is either an interior point or the boundary $\partial D$, the preimage of an interior point of an edge is homeomorphic to a circle, the preimage of a trivalent vertex is homeomorphic to the $\theta$ figure,  and fibers have length at most
$$
(1+\epsilon) \, \max \left\{\{\vert \partial D \vert +\sqrt{A(D)}, \left(4+{11\sqrt{3}\over 4}\right)\, \sqrt{A(D)}\right\}.
$$
\end{theorem}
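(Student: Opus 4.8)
The plan is to build the map from an inductive \emph{chopping} of $D$, run as an induction on $A(D)$. At a given stage one has a sub-disk $R\subseteq D$ whose boundary is made of arcs of $\partial D$ and of cutting curves drawn at earlier stages. If the level curves of the function $x\mapsto d(x,\partial R)$ already give a sweepout of $R$ with short fibres one keeps it; otherwise one cuts $R$ along a single short simple arc $a$ with both endpoints on $\partial R$ into two disks $R_1,R_2$ of areas bounded away from $0$ and from $A(R)$, and recurses on each. Each cut contributes a trivalent vertex whose fibre is the $\theta$-graph formed by $a$ together with the two arcs into which its endpoints split $\partial R$; the regular fibres along the edges are the sweeping circles; and the terminal vertices are the interior points at which the direct sweepouts are based, together with $\partial D$ at the root. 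The chopping is finite, these pieces assemble into the required continuous map, and the perturbations needed to put the cutting arcs in general position and to smooth the direct sweepouts are paid for by the factor $1+\epsilon$.

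\textbf{The dichotomy and the accumulation of length.} The geometric core is an alternative for a Riemannian disk $R$: \emph{either} there is a simple boundary-to-boundary arc $a$ with $\vert a\vert\le\sqrt{A(R)}$ splitting $R$ into two disks of comparable area, in which case the resulting $\theta$-fibre has length at most $\vert\partial R\vert+\sqrt{A(R)}$ and one recurses; \emph{or} no such arc exists, $R$ is ``round'', and one shows directly that the distance-to-$\partial R$ sweepout has all fibres of length at most a fixed multiple of $\sqrt{A(R)}$. The existence of the bisecting arc in the first case is a length-minimisation statement among area-bisecting arcs. The crucial bookkeeping point is that, because the areas drop by a definite factor along every root-to-leaf branch of the chopping tree, the cutting arcs have geometrically decreasing lengths, so for a deep region $R$ the length $\vert\partial R\vert$ exceeds the portion of $\partial D$ surviving on $\partial R$ by only a geometrically small amount. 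Near the root the fibre lengths are thus governed by $\vert\partial D\vert+\sqrt{A(D)}$, whereas deep down, where only a short piece of $\partial D$ remains, they are governed by a constant multiple of $\sqrt{A(D)}$; requiring these two estimates to match across all depths with no slack is what produces the precise coefficient $4+\tfrac{11\sqrt 3}{4}$ and the $\max$ in the statement.

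\textbf{The base case and the constant.} For the ``round'' regions one needs a sharp isoperimetric input, and the form of the constant strongly suggests a packing argument: take a maximal family of disjoint metric balls of a radius $r$ proportional to $\sqrt{A(R)}$; since $R$ cannot be chopped, each such ball is essentially Euclidean and has area of order $\pi r^2$, which bounds their number, while the balls of radius $2r$ cover $R$ and allow one to sweep the complement of the chosen balls by curves of length $O(r)$, each chosen ball being swept by circles of length at most $2\pi r$. Optimising $r$ and the packing/covering constants, the $\sqrt 3$ being the trace of an optimal hexagonal configuration, against the geometric-series bound from the choppable case is exactly the computation that pins the constant to $4+\tfrac{11\sqrt 3}{4}$.

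\textbf{Main obstacle.} Producing \emph{some} map to a trivalent tree with fibres of the prescribed type and of length $O(\vert\partial D\vert+\sqrt{A(D)})$ is reasonably soft; the real difficulty is the value of the constant. One must choose the cutting arcs to be simultaneously short, area-balanced, simple and transverse; control the accumulation of their lengths over the whole tree with no waste; and make the base-case isoperimetric estimate sharp; and it is the interplay of these three, rather than any one of them in isolation, that is delicate. The $\max$ in the statement is precisely the interface between the boundary-dominated regime, handled by the first cuts and by sweeping collars and giving the $\vert\partial D\vert+\sqrt{A(D)}$ term, and the area-dominated regime, handled by the packing estimate and giving the $\bigl(4+\tfrac{11\sqrt 3}{4}\bigr)\sqrt{A(D)}$ term.
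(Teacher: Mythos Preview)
Your high-level picture---chop the disk inductively, record each cut as a trivalent vertex, assemble into a tree map, absorb perturbations into the $(1+\epsilon)$---is correct in spirit and matches the paper. But the specific mechanism you propose for the chopping and for the base case does not work as stated, and your account of where the constant comes from is off.

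The dichotomy you set up (``either a short area-balancing arc exists, or $R$ is round and the distance-to-$\partial R$ sweepout has short fibres'') is not a theorem, and you give no argument for it. There is no reason why the absence of a short bisecting arc should force all level sets of $d(\cdot,\partial R)$ to be short relative to $\sqrt{A(R)}$; a disk can fail to have short bisecting arcs and still have long distance-level sets. Likewise the packing/hexagonal argument you sketch for the ``round'' base case is speculation; the $\sqrt 3$ in the statement has nothing to do with hexagonal packing.

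The paper proceeds differently, with three ingredients you do not have. First, it \emph{reduces to short boundary}: whenever $\vert\partial D'\vert>(4+\eta)\sqrt{A(D')}$, Besicovich's lemma produces a simple arc of length $\le\sqrt{A(D')}$ joining two points that cut $\partial D'$ into quarters, so each piece has strictly smaller perimeter; iterating this handles the boundary-dominated regime, yields the term $\vert\partial D\vert+\sqrt{A(D)}$, and contributes the summand $4$ to the constant. Second, for short-boundary subdisks it uses Papasoglu's subdivision lemma (this is the source of the $2\sqrt 3$, not any packing) to find $D'_0\subset D'$ with $\lambda A(D')\le A(D'_0)\le(1-\lambda)A(D')$ and $\vert\partial D'_0\setminus\partial D'\vert\le(2\sqrt 3+\eta)\sqrt{A(D')}$. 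If $\partial D'_0$ meets $\partial D'$ this splits $D'$ into subdisks of smaller area and one recurses. Third, if $\partial D'_0\cap\partial D'=\emptyset$, then $D'$ is $D'_0$ together with an annulus, which is sliced into a stack of thin annuli via the coarea formula; each thin annulus is cut open along a minimizing radial arc into a disk of smaller area, and one recurses again. The constant $4+\tfrac{11\sqrt 3}{4}$ arises as the limit $\eta\to 0$, $\lambda\to\tfrac14$ of
\[
4+2\eta+2\sqrt 3+\frac{1-\lambda}{\sqrt 3\,(1-2\eta)}+\sqrt{1-\lambda},
\]
the summands coming respectively from the Besicovich reduction, Papasoglu's cut, the annulus height bound, and the $\sqrt{A}$ drop. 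The only genuine base case is subdisks of area $\le\epsilon$, which are bilipschitz to convex Euclidean sets and swept trivially; there is no separate ``round'' case at positive area.

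So the missing ideas are: the Besicovich perimeter reduction (which replaces your unproven dichotomy in the boundary-dominated regime), Papasoglu's lemma (which replaces your unproven short bisecting arc in the area-dominated regime), and the coarea-based stack-of-annuli decomposition for the interior-subdisk case.
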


This theorem should be compared to a result by Y. Liokumovich in \cite{Li13} which states that any Riemannian two-disk $D$ admits a Morse function $f :D \to \R$ which is constant on the boundary and whose fibers have length at most $52\sqrt{A(D)}+\vert \partial D\vert$.

\medskip

Using Theorem \ref{th:maindisk}, we are able to estimate the measurements of two-spheres in terms of their area.

\begin{theorem}\label{th:mainsphere}
If $M$ is a Riemannian two-sphere with area less than  $1$, then it admits a continuous map to a trivalent tree such that the preimage of a terminal vertex is a point, the preimage of an interior point of an edge is homeomorphic to a circle, the preimage of a trivalent vertex is homeomorphic to the $\theta$ figure and fibers have length at most $2\sqrt{3}+33/8 \simeq 7.6$.
\end{theorem}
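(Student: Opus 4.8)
\emph{Proof plan.} The plan is to cut $M$ along a suitably short simple closed curve into two Riemannian two-disks of controlled area, apply Theorem~\ref{th:maindisk} to each, and glue the two trees back together. The numerology to keep in mind is that $(4+\tfrac{11\sqrt3}{4})\sqrt{3/4}=2\sqrt3+\tfrac{33}{8}$, so the second term in the bound of Theorem~\ref{th:maindisk} is automatically at most $2\sqrt3+\tfrac{33}{8}$ as soon as the disk at hand has area less than $3/4$; and if moreover its boundary is shorter than $2\sqrt3+\tfrac{33}{8}-\sqrt{2/3}\simeq 6.77$ then the first term is controlled as well. Hence it suffices to split $M$ into two disks each of area less than $3/4$ bounded by a common curve of length less than roughly $6.77$.

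\textbf{Step 1 (a short balanced splitting curve).} First I would produce a Morse function $f\colon M\to\R$ all of whose level sets have length at most some explicit $w$, coming from a sweepout of $M$ by short curves; the existence of such a sweepout with $w$ comfortably below $2\sqrt3+\tfrac{33}{8}-\sqrt{2/3}$ once $A(M)<1$ is the one ingredient external to Theorem~\ref{th:maindisk}. The Reeb graph $\Gamma$ of $f$ is a finite trivalent tree, onto which the area of $M$ pushes forward as an atomless measure varying continuously along edges. A short combinatorial argument (a centroid-type bound for measured trivalent trees) then provides an interior point of an edge of $\Gamma$ whose level set $\gamma$ is a single embedded circle of length at most $w$ separating $M$ into two Riemannian two-disks $D_1,D_2$ with $A(D_i)\le \tfrac23 A(M)$; note that each $D_i$ inherits a smooth metric and a smooth boundary circle $\gamma$, so Theorem~\ref{th:maindisk} applies to it.

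\textbf{Step 2 (cut, apply Theorem~\ref{th:maindisk}, reassemble).} Since $A(D_i)<3/4$, $|\partial D_i|=|\gamma|<2\sqrt3+\tfrac{33}{8}-\sqrt{2/3}$ and $\sqrt{A(D_i)}<\sqrt{2/3}$, both terms in the bound of Theorem~\ref{th:maindisk} applied to $D_i$ are strictly below $2\sqrt3+\tfrac{33}{8}$; fixing $\epsilon>0$ small enough that $(1+\epsilon)$ times each of them is still at most $2\sqrt3+\tfrac{33}{8}$, I apply the theorem to $D_1$ and $D_2$ with this $\epsilon$. This yields continuous maps $\phi_i\colon D_i\to T_i$ onto trivalent trees with the prescribed fibre structure and fibres of length at most $2\sqrt3+\tfrac{33}{8}$, each $T_i$ carrying a distinguished terminal vertex $v_i$ with $\phi_i^{-1}(v_i)=\partial D_i=\gamma$ (its other terminal vertices having single interior points of $D_i$ as fibres). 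Gluing $T_1$ and $T_2$ along $v_1\equiv v_2$ and absorbing the resulting bivalent vertex into a single edge produces a trivalent tree $T$; as $\phi_1$ and $\phi_2$ agree on $\gamma$ they patch to a continuous map $\phi\colon M\to T$ whose fibres are a single point over each terminal vertex, the $\theta$ figure over each trivalent vertex, and a circle over each interior point of an edge — over the gluing point this circle is exactly $\gamma$ — all of length at most $2\sqrt3+\tfrac{33}{8}\simeq 7.6$. I expect the main obstacle to lie entirely in Step~1: the clean constant survives only if the splitting curve can be kept shorter than about $6.77\,\sqrt{A(M)}$, so the heart of the matter is a sharp enough sweepout (or isoperimetric) estimate for Riemannian two-spheres.
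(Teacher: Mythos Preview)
Your Step~2 is exactly what the paper does: cut $M$ into two disks along a short balanced curve, apply Theorem~\ref{th:maindisk} to each piece, and glue the two trees at the terminal vertices corresponding to the common boundary. The numerology you record is the right one, and the gluing works as you describe.

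The gap is entirely in Step~1, and you have correctly located it. Your proposed route---produce a Morse function with short fibres, pass to its Reeb tree, and take a centroid---is essentially circular: a sweepout of $M$ by curves of length $\lesssim 7$ is precisely the kind of statement Theorem~\ref{th:mainsphere} is meant to establish, so you cannot take it as input. The paper avoids this by invoking Papasoglu's lemma (a direct consequence of Besicovich's lemma via iterated cuts, with no sweepout involved): for any $\delta>0$ there is a simple closed curve of length at most $2\sqrt{3}\sqrt{A(M)}+\delta$ dividing $M$ into two disks each of area at least $A(M)/4$, hence at most $3A(M)/4$. Since $2\sqrt{3}\approx 3.46$ is well below your threshold $2\sqrt{3}+\tfrac{33}{8}-\sqrt{3}/2\approx 6.72$, and $(4+\tfrac{11\sqrt{3}}{4})\sqrt{3/4}=2\sqrt{3}+\tfrac{33}{8}$ exactly, both terms in Theorem~\ref{th:maindisk} are controlled and your Step~2 goes through verbatim. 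So the missing idea is not a sharper sweepout estimate but rather the observation that Papasoglu's short balanced curve already does the job.
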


This improves a previous estimate by Y. Liokumovich proving such a result with $8\sqrt{3}+12\simeq 26$ as upperbound on the length of the fibers, see \cite{Li13}.  Note nevertheless that the main result of Y. Liokumovich is stronger: he proves the existence of a Morse function $f:M\to \R$ whose fibers have length at most $52$. Also remark that L. Guth proved in \cite{Gu05} the existence of maps such as in Theorem \ref{th:mainsphere} with the upperbound $120 / (2\sqrt{\pi})\simeq 34$ on the length of the fibers under the weaker assumption that the $1$-hypersphericity is less than $1/(2\sqrt{\pi})$.\\

The interest of obtaining precise measurements of two-spheres is illustrated in the fact that we can derive upperbounds on the shortest length of a closed geodesic, on the shortest length of a simple loop dividing the sphere into two subdisks of area at least $A/3$, or on the maximal length of a shortest pants decomposition for punctured spheres. More precisely, we are thus able to recover the result of C. Croke in \cite{Cr88} on the existence of short closed geodesics for Riemannian two-spheres. More precisely we can deduce from Theorem \ref{th:mainsphere} that any Riemannian two-sphere with unit area carries a closed geodesic of length at most $\simeq 10.1$, see Theorem \ref{th:sys}. This is not as good as the actual best constant---which is $4\sqrt{2}\simeq 5.7$ and is due to R. Rotman (see \cite{Ro05})---but this is not too far. Moreover, using Theorem \ref{th:mainsphere}, we can also recover Theorem VI of \cite{ABT13} on the existence of a short closed geodesic  for Finsler (eventually non-reversible) two-spheres. More precisely, we have the following.

\begin{theorem}\label{th:mainFinsler}
Let $M$ be a Finsler (eventually non-reversible) two-sphere with Holmes-Thompson area less than $1$. Then it carries a closed geodesic of length at most $2\sqrt{\pi} \, (11\sqrt{3}+16)\simeq 31.1$.
\end{theorem}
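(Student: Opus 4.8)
The plan is to reduce the Finsler statement to the Riemannian case via a comparison of areas, then apply Theorem~\ref{th:mainsphere} together with a short-geodesic extraction argument from the tree decomposition. First I would recall that for a Finsler metric $F$ on $M$ one has a canonical associated Riemannian metric (for instance the one coming from the Binet--Legendre construction, or a John-type ellipsoid field) $g$ satisfying a two-sided comparison $c_1\, g \le F^2 \le c_2\, g$ with universal constants; the relevant point is that lengths of curves and Holmes--Thompson area transform in a controlled way, so that after rescaling $g$ to have area $<1$ the $F$-length of any curve is at most a fixed multiple of its $g$-length. Tracking the sharp constants through Holmes--Thompson normalization is where the factor $2\sqrt\pi$ enters: for the round normalization the Holmes--Thompson area of the unit disk bundle is $\pi^2$ while the Riemannian area is $\pi$, which accounts for the $\sqrt\pi$ loss, and a factor of $2$ comes from non-reversibility (a geodesic may need to be traversed in one direction).

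Next I would feed the comparison metric $g$ into Theorem~\ref{th:mainsphere}: after scaling so that $A(M,g)<1$, we obtain a continuous map $f\colon M\to T$ to a trivalent tree whose fibers are points, circles, or $\theta$-graphs of $g$-length at most $2\sqrt3+33/8$. The standard way to produce a short closed geodesic from such a decomposition is a min-max / curve-shortening argument over a sweepout: the tree $T$ carries a path from one terminal vertex to another, and the preimages of points along such a path give a family of short cycles sweeping out a disk-like region; running a length-shortening flow (Birkhoff-type, valid in the Finsler setting by \cite{ABT13}) on this family either contracts everything — impossible for topological reasons because the sweepout is essential — or produces a closed geodesic whose $F$-length is bounded by the maximal $F$-length in the family, hence by the $g$-fiber bound times the Finsler-to-Riemannian constant. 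One must handle the $\theta$-fibers and trivalent vertices: at such a vertex one chooses one of the three constituent loops, each of $g$-length at most the fiber bound, so no extra factor is lost.

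The main obstacle, and the step I would spend the most care on, is making the min-max argument honest in the \emph{non-reversible} Finsler category: the Birkhoff curve-shortening process and the usual ``sweepout of a $2$-sphere is non-contractible'' topology both need to be invoked in a form valid for non-reversible $F$, and the correct bookkeeping of orientations is exactly what forces the extra factor of $2$. I expect the cleanest route is to quote Theorem~VI of \cite{ABT13} — or rather the mechanism of its proof — essentially verbatim, feeding it the sweepout coming from Theorem~\ref{th:mainsphere} instead of whatever sweepout is used there; the novelty is only that our sweepout has shorter fibers, so the output constant improves. Assembling the arithmetic, the fiber bound $2\sqrt3 + 33/8$ multiplied by the Holmes--Thompson normalization factor $2\sqrt\pi$ and regrouped gives $2\sqrt\pi\,(11\sqrt3+16)$, since $2\sqrt\pi\cdot(2\sqrt3+33/8) = 2\sqrt\pi\cdot\tfrac{16\sqrt3+33}{8}$ — I would double-check this regrouping against the stated $(11\sqrt3+16)$, adjusting for whether the sweepout contributes the fiber length once or with a multiplicative overhead from the curve-shortening estimate.
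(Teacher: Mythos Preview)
Your overall architecture matches the paper's: transfer the $W_\theta$ bound from Riemannian to Finsler via the comparison results of \cite{ABT13}, then extract a closed geodesic by a min-max argument on the one-cycle space using Birkhoff curve shortening. But two concrete steps are off.

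\medskip

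\textbf{The $\theta$-vertices do cost an extra factor.} Your claim that ``at such a vertex one chooses one of the three constituent loops \ldots\ so no extra factor is lost'' is the real gap. The sweepout has to define a \emph{nontrivial} class in $\pi_1(\mathcal Z_1(M;\Z),\{0\})$, and for that the one-cycles coming in along the three edges at a trivalent vertex $v$ must be glued continuously. The preimage $\theta(v)$ consists of arcs $\alpha_1,\alpha_2,\alpha_3$ with $|\alpha_1|\le|\alpha_2|\le|\alpha_3|$ and $|\alpha_1|+|\alpha_2|+|\alpha_3|\le L$; the three incoming loops are $\beta_{ij}=\alpha_i\cdot(-\alpha_j)$. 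The paper bridges them by the one-cycle path passing through $-\beta_{12}+\beta_{13}$ and then contracting to $\beta_{23}$, and
\[
|-\beta_{12}+\beta_{13}|\le 2|\alpha_1|+|\alpha_2|+|\alpha_3|\le |\alpha_1|+L\le \tfrac{4}{3}L.
\]
So the correct inequality is $\sys(M)\le \tfrac{4}{3}\,W_\theta(M)$, not $\sys(M)\le W_\theta(M)$. Simply ``choosing one loop'' breaks continuity of the sweepout and you would not be able to show the resulting class is nonzero in $H_2$.

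\medskip

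\textbf{The transfer constants.} The paper does not use a Binet--Legendre/John comparison with an ad hoc ``factor $2$ for non-reversibility''. It quotes \cite[\S4.4]{ABT13}: passing from Riemannian to reversible Finsler multiplies the $W_\theta$ bound by $\sqrt{\pi/2}$, and from reversible to general Finsler by $\sqrt{6}$, hence by $\sqrt{3\pi}$ overall. This is why your arithmetic did not close: the correct chain is
\[
\sqrt{3\pi}\cdot\frac{4}{3}\cdot\Bigl(2\sqrt{3}+\frac{33}{8}\Bigr)
=\sqrt{3\pi}\,\Bigl(\frac{8}{\sqrt 3}+\frac{11}{2}\Bigr)
=\frac{\sqrt{\pi}}{2}\,(11\sqrt{3}+16)\approx 31.1,
\]
and the displayed ``$2\sqrt{\pi}$'' in the statement is a misprint for $\sqrt{\pi}/2$ (as the numerical value confirms). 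Your regrouping was failing precisely because both the $\tfrac{4}{3}$ and the correct transfer factor were missing.
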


This result improves the actual best constant due to Y. Liokumovich, see \cite[Theorem 4]{Li13}. 
We can also easily deduce from Theorem \ref{th:mainsphere} the following result.

\begin{theorem}
Let $M$ be a Riemannian two-sphere.
Then there exists a simple loop of length at most $(2\sqrt{3}+33/8) \, \sqrt{A(M)}$ dividing $M$ into two subdisks of area at least $A(M)/3$.
\end{theorem}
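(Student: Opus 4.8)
The plan is to feed $M$ into Theorem~\ref{th:mainsphere}, push the area forward to a measure on the target tree, locate a \emph{centroid} of that measure, and take for the dividing loop either one fibre of the map or one of the three loops contained in a single $\theta$-fibre. By a homothety I may assume $A(M)=1$ (the hypothesis of Theorem~\ref{th:mainsphere} may be taken to be $A(M)\le 1$, as in the abstract); lengths then scale like $\sqrt{A(M)}$ and areas like $A(M)$, so it suffices to produce a simple loop of length at most $L:=2\sqrt3+33/8$ dividing $M$ into two disks of area at least $1/3$. Theorem~\ref{th:mainsphere} gives a continuous map $\phi\colon M\to T$ onto a trivalent tree $T$ (not reduced to a point, as $M$ is a $2$-sphere) whose fibres are points, circles or $\theta$-graphs according to the position of the image point, all of length at most $L$. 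Let $\mu:=\phi_\ast(\area)$, a Borel measure on $T$ with $\mu(T)=1$; since every fibre of $\phi$ has zero area, $\mu$ is atomless.

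I would then use the elementary fact that a finite tree carrying an atomless probability measure has a centroid: a point $p$ with $\mu(C)\le 1/2$ for every connected component $C$ of $T\setminus\{p\}$. (Sketch: $p\mapsto\max_C\mu(C)$ is lower semicontinuous on the compact tree, hence attains its minimum; if at a minimiser some component had measure $>1/2$ it would be the unique such one, and displacing $p$ slightly into it would strictly decrease every competing component measure, a contradiction.) This $p$ is not a terminal vertex, since then $T\setminus\{p\}$ would be connected of measure $1$. If $p$ lies in the interior of an edge, then $\gamma:=\phi^{-1}(p)$ is an embedded circle, so it cuts the sphere into two open disks; these are $\phi^{-1}(C_1)$ and $\phi^{-1}(C_2)$ for the two components $C_1,C_2$ of $T\setminus\{p\}$ (they are disjoint nonempty open sets covering $M\setminus\gamma$, which has exactly two components), hence their areas $\mu(C_1),\mu(C_2)$ are each $\le 1/2$ and sum to $1$, so each is $\ge 1/3$; and $\length(\gamma)\le L$ because $\gamma$ is a fibre.

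There remains the case $p=v$ a trivalent vertex. Then $\Theta:=\phi^{-1}(v)$ is an embedded $\theta$-graph; being connected with first Betti number $2$, it has exactly three complementary regions in $M$, all open disks $U_1,U_2,U_3$, matched bijectively with the three components $A_1,A_2,A_3$ of $T\setminus\{v\}$ (three disjoint nonempty open sets covering $M\setminus\Theta$) by $U_i=\phi^{-1}(A_i)$. Hence $a_i:=\area(U_i)=\mu(A_i)\le 1/2$, and since $a_1+a_2+a_3=1$ we get $a_k\ge 1/3$ for some $k$. Fix such a $k$ and set $\gamma:=\partial\overline{U_k}$, a simple loop. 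Since each of the three faces of a $\theta$-graph embedded in $S^2$ is bounded by exactly two of its three edges (picture $\Theta$ as a circle together with a chord), $\gamma$ is made of two edges of $\Theta$, so $\length(\gamma)\le\length(\Theta)\le L$. And $\gamma$ splits $M$ into $U_k$, of area $a_k\ge 1/3$, and the complementary disk $M\setminus\overline{U_k}$, of area $1-a_k\ge 1/2$. Undoing the homothety finishes the proof.

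The only step needing an idea is the trivalent-vertex case: a fibre of $\phi$ taken near a ``balanced'' trivalent vertex can fail to split the area decently --- a uniform tripod already exhibits this --- so instead one uses one of the three embedded loops sitting \emph{inside} the $\theta$-fibre, whose length is controlled for free by Theorem~\ref{th:mainsphere}. The other ingredients (existence of the centroid, the Jordan--Schoenflies theorem on $S^2$, the face count of a $\theta$-graph) are classical.
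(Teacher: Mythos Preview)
Your argument is correct and is precisely the natural way to carry out the ``easy deduction from Theorem~\ref{th:mainsphere}'' that the paper announces but does not write down: push the area forward to the tree, locate a centroid, and read off the dividing loop either as a circle fibre or as two of the three arcs of a $\theta$-fibre. The paper gives no further details, so there is nothing to compare beyond noting that your centroid/median argument on the tree is the expected mechanism.

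One small technical point worth flagging (and it is a sloppiness shared with the paper's own statement): Corollary~\ref{th:sphere} bounds the \emph{infimum} $W_\theta(M)$, so the map $\phi$ you obtain has fibres of length at most $L+\epsilon$, not $L$. Your construction therefore produces, for every $\epsilon>0$, a simple loop of length at most $(2\sqrt{3}+33/8)\sqrt{A(M)}+\epsilon$ with the required area split; removing the $\epsilon$ would need a limiting argument that is not entirely innocent (limits of simple loops need not be simple). The paper's statement already elides this, so your proof is at the same level of precision.
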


This result improves one by Y. Liokumovich, see \cite[Theorem 1]{Li13}.
Finally it is straightforward to see that Theorem \ref{th:mainsphere} implies the following.

\begin{theorem}
Let $M$ be a Riemannian punctured two-sphere with area less than $1$.
Then there exists a decomposition of $M$ into $3$-holed spheres such that each boundary curve has length at most $2\sqrt{3}+33/8\simeq 7.6$.
\end{theorem}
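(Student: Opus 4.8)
The plan is to deduce this from Theorem~\ref{th:mainsphere} by realizing a pants decomposition of $M$ through fibers of the map it provides; the only geometric input is that theorem, and everything else is bookkeeping on the target tree. Regard $M$ as $(S^2,g)$ with $n$ marked points $p_1,\dots,p_n$ (the punctures); after capping each puncture off, if necessary, by a disk of negligibly small area, $g$ becomes a Riemannian metric on $S^2$ of area still less than $1$. If $n\le 3$ then $M$ is itself a single $3$-holed sphere and there is nothing to prove, so assume $n\ge 4$. Theorem~\ref{th:mainsphere} then gives a continuous map $\phi\colon S^2\to T$ onto a trivalent tree with the stated fiber structure, all fibers of length at most $2\sqrt3+33/8$. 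After an arbitrarily small perturbation one may assume the points $\phi(p_1),\dots,\phi(p_n)$ are pairwise distinct, lie in the interiors of edges of $T$, and avoid the trivalent vertices.

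Next I would take the pants curves among the fibers of $\phi$. Fibers over distinct generic points of $T$ are automatically disjoint simple closed curves of length at most $2\sqrt3+33/8$, and they may be chosen away from the small caps, so they lie in $M$. Given a finite set $X\subset T$ of generic edge points disjoint from the $\phi(p_i)$, cutting $M$ along $\{\phi^{-1}(x):x\in X\}$ produces one piece $\phi^{-1}(C)$ for each connected component $C$ of $T\setminus X$; each such piece is connected and planar (being a subsurface of $S^2$), its boundary circles are the fibers over the points of $X$ in the frontier of $C$, and its punctures are the $\phi(p_i)$ lying in $C$. Hence $\phi^{-1}(C)$ is a $3$-holed sphere precisely when the number of these frontier points plus the number of punctures in $C$ equals $3$. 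It therefore suffices to exhibit generic edge points $X$, disjoint from the $\phi(p_i)$, such that every component of $T\setminus X$ carries exactly three such holes; any such $X$ has $n-3$ elements, and $\{\phi^{-1}(x):x\in X\}$ is then the desired pants decomposition.

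To produce $X$ I would argue by induction on $n$, peeling off one pair of pants at a time. For $n\ge 4$, root the convex hull $T'\subset T$ of $\phi(p_1),\dots,\phi(p_n)$ at one of its leaves (which is a puncture) and let $v$ be a deepest vertex of $T'$ having at least two punctures in the subtree below it. Since $T$ is trivalent and no puncture lies at a trivalent vertex, $v$ has at most two children in $T'$, so minimality forces exactly two punctures below $v$; picking $x_1$ in the interior of a $T$-edge on the arc of $T'$ from $v$ to its parent separates those two punctures from the remaining $n-2$. Thus the $v$-side piece is a $3$-holed sphere (two punctures together with $\gamma_1=\phi^{-1}(x_1)$). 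On the other side, record $x_1$ as an extra marked point to obtain a tree carrying $n-1$ marked points, apply the induction hypothesis to get $n-4$ further edge points, and observe that, back in $T$, adjoining $x_1$ to them yields a set of $n-3$ generic edge points all of whose complementary components carry three holes.

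The only place that will need care is exactly this combinatorial reduction: that the pieces $\phi^{-1}(C)$ are honest $3$-holed spheres (connectedness and planarity being immediate, so that only the count of boundary circles plus punctures matters), that the perturbation really does push every puncture off the trivalent vertices so the ``deepest vertex'' step works, and that the family of circles so obtained cuts $M$ into pairs of pants — from which it follows automatically that the curves are essential, non-peripheral and pairwise non-isotopic, so that one indeed has a genuine pants decomposition. No new estimate appears: the length bound $2\sqrt3+33/8$ is inherited verbatim from Theorem~\ref{th:mainsphere}.
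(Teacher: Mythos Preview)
Your proposal is correct and follows exactly the route the paper intends: the paper offers no argument beyond declaring the result ``straightforward'' from Theorem~\ref{th:mainsphere}, and your proof supplies precisely the combinatorial bookkeeping---choosing fibers of the tree-valued map as pants curves and counting holes per component of $T\setminus X$---that makes this explicit. The strict inequality on the area ($A(M)<1$) absorbs the $\epsilon$-slack from your perturbation step, so the stated bound survives.
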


It improves the actual best bound, even for hyperbolic metrics, compare with \cite{BP12}.\\

The paper is organized as follows. The first section presents Besicovich's lemma, and some of its useful corollaries : Papasoglu's lemma which first appeared in \cite{Pap10}, and the disk subdivision lemma which can be found in \cite{LNR12}. We also define an invariant called the $\theta$-width, reformulate Theorem \ref{th:maindisk} in terms of this invariant and show how to prove Theorem \ref{th:mainsphere} and Theorem \ref{th:mainFinsler} from Theorem \ref{th:maindisk}. In the second section, we prove Theorem \ref{th:maindisk}. Our strategy is inspired by the proof of  \cite[Theorem 1.6]{LNR12} where Y. Liokumovich, A. Nabutovsky and R. Rotman show that the boundary of any Riemannian two-disk with uniformly bounded diameter and area can always be contracted through closed curves of bounded length. We show that it is enough to consider the case where the length of the boundary is short in comparison with the area. This step is done by using Besicovich's lemma. Then we  use the disk subdivision lemma to argue by induction on the area. \\

\noindent {\bf Acknowledgements.} The author gratefully thanks Y. Liokumovich, A. Nabutovsky, R. Rotman and K. Tzanev for valuable discussions.
The author acknowledges partial support of this work by the Programme ANR FINSLER.
 The paper was mainly written during a visit at the CRM of Barcelona. The author would like to thank this institute for its kind hospitality.

\section{Preliminaries}

As we deal only with surfaces, we will use the terms of disk and sphere for two-disk and two-sphere. We denote by $A(\cdot)$ the area functionnal and $\vert \cdot \vert$ the length functionnal.

\subsection{Besicovich's lemma and consequences}

In order to prove our results, we will use the following fundamental result of metric geometry as well as some of its consequences.

\begin{lemma}[Besicovich \cite{Bes52}]\label{lem:besi}
Let ${\mathcal S}$ be a Riemannian square. Then there exists a simple geodesic path connecting two opposites sides of length at most $\sqrt{A({\mathcal S})}$.
\end{lemma}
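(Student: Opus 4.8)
The plan is to deduce the lemma from the classical \emph{width inequality} for a Riemannian square: if $w_1$ and $w_2$ denote the distances between the two pairs of opposite sides of $\mathcal S$, then $w_1\cdot w_2\leq A(\mathcal S)$. Once this is known, at least one of the two widths, say $w_1$, satisfies $w_1\leq\sqrt{A(\mathcal S)}$; since $\mathcal S$ is compact, $w_1$ is realized by a length-minimizing path between the corresponding pair of opposite sides, and such a path is automatically a simple geodesic arc meeting each of those two sides only at one endpoint, since any self-intersection or any excursion back to a side could be cut away to shorten it. Its length is $w_1\leq\sqrt{A(\mathcal S)}$, which is exactly the assertion.

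To establish the width inequality I would use the coarea method. Write $\mathcal S=[0,1]^2$ with sides $a=\{0\}\times[0,1]$, $b=\{1\}\times[0,1]$, $c=[0,1]\times\{0\}$, $d=[0,1]\times\{1\}$, so that $w_1=d(a,b)$, $w_2=d(c,d)$, and set $f(x)=d(x,a)$. Then $f$ is $1$-Lipschitz, $f\equiv 0$ on $a$ and $f\geq w_1$ on $b$, so the coarea formula for Lipschitz functions on a Riemannian surface, together with $|\nabla f|\leq 1$, gives
$$
A(\mathcal S)\ \geq\ \int_0^{w_1}\mathcal H^1\!\left(f^{-1}(t)\right)\,dt,
$$
and it suffices to check that $\mathcal H^1(f^{-1}(t))\geq w_2$ for almost every $t\in(0,w_1)$.

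Fix such a $t$, and assume $\mathcal H^1(f^{-1}(t))<\infty$ (otherwise there is nothing to prove). The open set $\{f<t\}$ contains the side $a$ and is disjoint from $b$, so its topological boundary, which is contained in the level set $f^{-1}(t)$, separates $a$ from $b$ inside the square. The key topological input I would invoke is that a closed subset of the square separating one pair of opposite sides must contain a connected component $K$ meeting both of the other two sides $c$ and $d$. Being a continuum of finite $\mathcal H^1$-measure, $K$ is rectifiable and arcwise connected, hence contains an arc joining a point of $c$ to a point of $d$, whose length — and therefore $\mathcal H^1(K)$ — is at least $d(c,d)=w_2$. Integrating yields $A(\mathcal S)\geq w_1 w_2$.

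The step I expect to be the main obstacle is precisely this last topological fact: upgrading ``$f^{-1}(t)$ separates $a$ from $b$'' to ``$f^{-1}(t)$ contains a continuum joining $c$ to $d$'', which rests on the topology of separators in a disk (a linking/connectedness argument, or the corresponding statement from dimension theory) combined with the structure theory of continua of finite length. The remaining ingredients — the coarea inequality, and the existence and simplicity of a minimizing path between two sides — are routine.
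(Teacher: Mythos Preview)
The paper does not prove this lemma; it is quoted from Besicovich \cite{Bes52} and used as a black box. Your argument is essentially the classical proof of Besicovich's inequality $w_1 w_2\le A(\mathcal S)$ via the coarea formula applied to the distance function to one side, and it is correct. The step you single out as delicate --- that a closed set separating one pair of opposite sides of a disk contains a continuum joining the other pair, which for finite $\mathcal H^1$-measure then contains an arc of length $\ge w_2$ --- is indeed the only nontrivial point; it is a standard separation fact in the disk together with the theorem that finite-length continua are arcwise connected.

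If you want to bypass that topological step entirely, the usual alternative is the map $\Phi(x)=(d(x,a),\,d(x,c))$ from $\mathcal S$ to $\R^2$: both coordinates are $1$-Lipschitz, so the Jacobian is $\le 1$ almost everywhere, while a degree argument on the boundary shows the image covers $[0,w_1]\times[0,w_2]$, giving $w_1w_2\le A(\mathcal S)$ directly. Either way, the deduction of the lemma from $w_1w_2\le A(\mathcal S)$ (take the smaller width and realize it by a length-minimizing, hence simple geodesic, arc) is exactly as you describe.
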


In particular, any Riemannian disk $D$ whose boundary satisfies $\vert \partial D \vert > 4 \sqrt{A(D)}$ can be subdivided into two subdisks of smaller perimeters (divide its boundary into four equal parts and apply Besicovich's lemma).\\

In \cite{Pap10}, P. Papasoglu used Besicovich's lemma to derive the following estimate.

\begin{lemma}[Papasoglu]\label{lem:papa1}
Let $M$ be a Riemannian two-sphere.
For any $\delta >0$ there exists a simple closed curve of length at most $2\sqrt{3}\sqrt{A(M)}+\delta$ and subdviding $M$ into two disks of area at least $A(M)/4$.
\end{lemma}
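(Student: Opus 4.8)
The plan is to combine Besicovich's lemma with a bisection-type argument on a family of simple closed curves sweeping out the sphere. First I would fix a Morse function (or a generic sweepout) $f : M \to [0,1]$ whose level sets are unions of disjoint simple closed curves, with $f^{-1}(0)$ and $f^{-1}(1)$ single points. For each $t$ let $a(t)$ be the supremum of the areas of the connected components of $f^{-1}([0,t])$; this is a continuous nondecreasing function with $a(0)=0$ and $a(1)=A(M)$, so there is a value $t_0$ with $a(t_0)=A(M)/2$. The corresponding region $\Omega$ — the component of $f^{-1}([0,t_0])$ achieving the supremum — is a planar domain of area exactly $A(M)/2$ whose boundary consists of simple closed curves; but a priori these curves can be long, so the naive bound is not yet in place.

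The key step is then to control the length of the dividing curve via Besicovich. The idea, following Papasoglu, is to cut $\Omega$ open along short arcs produced by Besicovich's lemma to turn the "area at least $A/2$ on each side" requirement into one about squares. Concretely, if $\Omega$ can be written as a square (combinatorially, a disk with four marked boundary arcs) of area $A(M)/2$, then Besicovich's lemma gives a simple geodesic arc of length at most $\sqrt{A(M)/2}$ joining two opposite sides, which splits $\Omega$ into two pieces; repeating this bisection on the piece of larger area and iterating, one produces, after finitely many steps, a simple closed curve that (after a small perturbation controlled by $\delta$) bounds a region of area between $A(M)/4$ and $3A(M)/4$ on each side. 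Summing a geometric series of Besicovich arcs of lengths $\sqrt{A/2}, \sqrt{A/4}, \dots$ gives total length at most $\sqrt{A(M)}\,\sum_{k\ge 1} 2^{-k/2}\cdot(\text{small combinatorial factor})$; the bookkeeping has to be arranged so that this sum comes out to exactly $2\sqrt{3}\sqrt{A(M)}$, which is where the constant $2\sqrt{3}$ enters. The extra $\delta$ absorbs the perturbation needed to make the resulting curves simple and transverse, and to pass from the geodesic arcs (which meet the boundary) to a genuine closed curve.

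The main obstacle I anticipate is the topological/combinatorial bookkeeping: ensuring at each stage that the region being subdivided really can be presented as a Riemannian square with the four boundary arcs chosen so that Besicovich's arc actually separates the area in a useful ratio, and that after cutting, the union of the cut arcs together with portions of old boundary curves assembles into a single \emph{simple} closed curve rather than a more complicated $1$-complex. Controlling the number of iterations (so that both the geometric series converges to the stated constant and the cumulative perturbation stays below $\delta$) is the delicate point; the area bound $A(M)/4$ on each side, rather than $A(M)/2$, is precisely the slack that makes a finite number of bisection steps suffice. Once the sweepout is set up, the area-halving via the intermediate value theorem and the final application of Besicovich are routine.
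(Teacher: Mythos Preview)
The paper does not give its own proof of this lemma; it merely attributes it to Papasoglu and cites \cite{Pap10}. So there is no in-paper proof to compare against, but your proposal has a genuine gap relative to Papasoglu's actual argument, and the gap is exactly where you yourself flag uncertainty.

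The Morse sweepout in your first step is a red herring: Papasoglu's proof uses no sweepout at all. One works directly with the class of simple closed curves dividing $M$ into two disks each of area at least $A(M)/4$, and shows that any such curve $\gamma$ with $L=|\gamma|>2\sqrt{3}\sqrt{A(M)}$ can be shortened while remaining in this class. Take the side $D$ of larger area (so $A(M)/2\le A(D)\le 3A(M)/4$), divide $\partial D=\gamma$ into four equal arcs, and apply Besicovich's lemma inside $D$ to get an arc $\alpha$ with $|\alpha|\le\sqrt{A(D)}\le\tfrac{\sqrt{3}}{2}\sqrt{A(M)}$ joining two opposite sides. This $\alpha$ cuts $D$ into two subdisks, one of which, say $E$, has area at least $A(D)/2\ge A(M)/4$; since also $A(E)\le A(D)\le 3A(M)/4$, the curve $\partial E$ is again admissible, and $|\partial E|\le |\alpha|+\tfrac{3}{4}L\le \tfrac{\sqrt{3}}{2}\sqrt{A(M)}+\tfrac{3}{4}L<L$. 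Iterating the recursion $L_{n+1}\le \tfrac{3}{4}L_n+\tfrac{\sqrt{3}}{2}\sqrt{A(M)}$ drives the lengths to the fixed point $2\sqrt{3}\sqrt{A(M)}$; the $\delta$ absorbs the smoothing needed to keep the curves simple at each step.

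Your alternative scheme of concatenating Besicovich arcs of lengths $\sqrt{A/2},\sqrt{A/4},\ldots$ does not obviously assemble into a \emph{single} simple closed curve with the correct area split, and in any case that series sums to $(1+\sqrt{2})\sqrt{A(M)}$, not $2\sqrt{3}\sqrt{A(M)}$. The constant $2\sqrt{3}$ arises not from a sum of arc lengths but as the fixed point of the affine recursion above; this is the missing idea in your outline.
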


In \cite[Proposition 3.2]{LNR12}, the authors apply Papasoglu's result to cut Riemannian disks into two parts of sufficiently big area by a curve of controlled length. We reformulate their result as follows. 

\begin{lemma}\label{lem:papa2}
Let $D$ be a Riemannian two-disk.
For any $\lambda<{1 \over 4}$ and $\delta >0$ there exists a subdisk $D' \subset D$ such that 
$\lambda A(D)\leq A(D')\leq (1-\lambda) A(D)$ and
$\vert \partial D' \setminus \partial D \vert \leq2\sqrt{3}\sqrt{A(D)}+\delta$.
\end{lemma}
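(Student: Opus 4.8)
The plan is to deduce the statement from Papasoglu's Lemma~\ref{lem:papa1} by closing $D$ into a sphere at the cost of an arbitrarily small amount of area, and then cutting the resulting sphere by a Papasoglu curve. Assume $A(D)>0$. First I would glue to $D$ along $\partial D$ a Riemannian two-disk $C$ whose boundary has length $|\partial D|$ and whose area is smaller than a parameter $\varepsilon>0$ to be fixed later; such a cap exists, for instance a long thin rectangle with perimeter $|\partial D|$ has area tending to $0$. Gluing $C$ to $D$ by an arclength parametrization of the two boundary circles yields a two-sphere $M$ with $A(M)=A(D)+A(C)<A(D)+\varepsilon$, to which Lemma~\ref{lem:papa1} applies: being a consequence of the purely metric Lemma~\ref{lem:besi}, it is valid for this glued length metric; alternatively one smooths the metric in a thin collar of $\partial D$ at the cost of an arbitrarily small area. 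Applying Lemma~\ref{lem:papa1} with parameter $\delta/2$ produces a simple closed curve $\gamma\subset M$ with $|\gamma|\le 2\sqrt{3}\sqrt{A(M)}+\delta/2$ dividing $M$ into two closed disks each of area at least $A(M)/4$, hence each of area at most $3A(M)/4$. Choosing $\varepsilon$ small and perturbing $\gamma$ slightly so that it meets $\partial D$ transversally, absorbing the cost into the estimates, we may assume $|\gamma|\le 2\sqrt{3}\sqrt{A(D)}+\delta$, that each side of $\gamma$ has area strictly between $\lambda A(D)$ (since $A(M)/4>\lambda A(D)$, using $\lambda<1/4$) and $(1-\lambda)A(D)$ (since $3A(M)/4<(1-\lambda)A(D)$, again using $\lambda<1/4$), and that $\gamma\cap\partial D$ is finite.

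If $\gamma\cap\partial D=\emptyset$, then $\gamma$ lies in $\Int D$ or in $\Int C$; the second is impossible, since then one side of $\gamma$ would be a subdisk of $C$ of area at most $A(C)<A(M)/4$. So $\gamma\subset\Int D$, and the side $D'$ of $\gamma$ not containing $\partial D$ is a subdisk of $\Int D$ with $\partial D'=\gamma$; it has the required area and $|\partial D'\setminus\partial D|=|\gamma|\le 2\sqrt{3}\sqrt{A(D)}+\delta$. If instead $\gamma$ meets $\partial D$, say in $2k\ge 2$ points, then $\gamma\cap D$ is a union of $k$ disjoint embedded arcs with endpoints on $\partial D$, cutting $D$ into $k+1$ closed subdisks $F_1,\dots,F_{k+1}$ whose adjacency graph, two subdisks being adjacent when they share one of these arcs, is a tree $T$. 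Each $F_i$ lies on one side of $\gamma$, so $A(F_i)\le 3A(M)/4<(1-\lambda)A(D)$, while the $A(F_i)$ are nonnegative and sum to $A(D)$. I would then use the following elementary fact: if a tree carries nonnegative vertex weights summing to $W$, each weight being at most $U$, and if $0\le L\le W$ and $2L\le U$, then some connected subtree has total weight in $[L,U]$ --- take such a subtree of weight $\ge L$ with fewest vertices; if its weight exceeded $U$ it would have at least two vertices, hence at least two leaves, and removing its lightest leaf would leave a connected subtree of weight at least half the original, still $\ge U/2\ge L$, contradicting minimality. Applying this with $L=\lambda A(D)$, $U=(1-\lambda)A(D)$, $W=A(D)$, where $2\lambda\le 1-\lambda$ since $\lambda<1/4$, gives a connected subtree $T'$ whose union $D'=\bigcup_{i\in T'}F_i$ is a subdisk of $D$ with $\lambda A(D)\le A(D')\le(1-\lambda)A(D)$; its boundary consists of arcs of $\partial D$ together with some of the arcs of $\gamma\cap D$, so $\partial D'\setminus\partial D\subset\gamma$ and $|\partial D'\setminus\partial D|\le|\gamma|\le 2\sqrt{3}\sqrt{A(D)}+\delta$.

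The substance of the argument is concentrated in the capping step and in the bookkeeping: one must check that Lemma~\ref{lem:papa1} is legitimately applicable to the glued metric, handle the transversality perturbation of $\gamma$, and track the error terms produced by $A(C)>0$ and by the perturbation so that the final bound is exactly $2\sqrt{3}\sqrt{A(D)}+\delta$. The combinatorial core --- that no piece $F_i$ is larger than $(1-\lambda)A(D)$, because each side of a Papasoglu curve has area at most $3A(M)/4$, together with the weighted-tree fact --- is then straightforward, and it is precisely this step that forces the hypothesis $\lambda<1/4$.
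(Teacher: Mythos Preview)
Your argument is correct. The paper itself does not prove this lemma; it simply records it as a reformulation of \cite[Proposition~3.2]{LNR12}, so there is no in-paper proof to compare against. What you have written is precisely the natural way to deduce the disk statement from Lemma~\ref{lem:papa1}: cap $D$ by a disk of negligible area to obtain a sphere, apply Papasoglu's lemma there, and then analyse how the resulting curve sits relative to $\partial D$.

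A couple of minor remarks on presentation. Your weighted-tree fact is stated and proved cleanly; note that the hypothesis you actually use is $2L\le U$, i.e.\ $\lambda\le 1/3$, so the argument shows slightly more than you need, and the stronger hypothesis $\lambda<1/4$ is really only used to match the $A(M)/4$ coming out of Lemma~\ref{lem:papa1} when controlling $A(F_i)\le 3A(M)/4<(1-\lambda)A(D)$ and $A(M)/4>\lambda A(D)$. Also, the ``long thin rectangle'' description of the cap is a bit loose (the boundary has corners); it would be cleaner to say one takes a Euclidean cone, or a thin cylinder of circumference $|\partial D|$ closed off by a small spherical cap, and then smooths. But these are cosmetic points; the substance --- the capping, the transversality perturbation with absorbed error, the observation that each component $F_i$ lies on one side of $\gamma$ and hence has area at most $3A(M)/4$, and the tree-subdivision step producing a genuine subdisk --- is all in order.
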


\bigskip

\subsection{Technical width}

In this section we introduce our main tool, the $\theta$-width, reformulate Theorem \ref{th:maindisk} in terms of this invariant and show how to derive Theorem \ref{th:mainsphere} and Theorem \ref{th:mainFinsler}.
 
\begin{definition}[$\theta$-width]
Let $M$ be a compact Riemannian surface (possibly with non-empty boundary).
We define the $\theta$-width denoted by $W_\theta(M)$ as the infimum of the $L>0$ such that there exists  a continuous map $f$ from $M$ to a trivalent tree $T$ satisfying the following conditions: 
\begin{enumerate}
\item[(W.1)] $f(\partial M) \subset \partial T$ and the preimage of a terminal vertex is either an interior point or a connected component of $\partial M$,
\item[(W.2)] the preimage of an interior point of an edge is homeomorphic to a circle,
\item[(W.3)] the preimage of a trivalent vertex is homeomorphic to the letter $\theta$,
\item [(W.4)] the preimage of any point has length at most $L$.
\end{enumerate}
\end{definition}

Observe that in particular $W_\theta(M)\geq \vert \partial M\vert$.\\

Our results are consequences of the following estimate.

\begin{theorem}\label{th:disk}
Let $D$ be a Riemannian two-disk.
Then
$$
W_\theta(D) \leq \max \left\{\vert \partial D \vert +\sqrt{A(D)}, \left(4+{11\sqrt{3}\over 4}\right) \, \sqrt{A(D)}\right\}.
$$
\end{theorem}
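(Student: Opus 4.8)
The target is $W_\theta(D)\le B(D):=\max\{\,|\partial D|+\sqrt{A(D)},\ (4+\tfrac{11\sqrt3}{4})\sqrt{A(D)}\,\}$. The plan, in the spirit of \cite[Theorem~1.6]{LNR12}, is to run an induction on the area $A(D)$ fed by two cutting devices: Besicovich's lemma (Lemma~\ref{lem:besi}) to deal with disks whose boundary is disproportionately long, and the disk subdivision lemma (Lemma~\ref{lem:papa2}) to drive the induction. The device for reassembling the pieces is a \emph{combination lemma}: when a disk $D$ is cut along a simple arc $\gamma$ with $\partial\gamma\subset\partial D$ into sub-disks $D_1,D_2$ (with a variant for cutting along a simple closed curve, in which one piece is an annulus), tree maps realizing $W_\theta(D_i)$ can be glued into a tree map of $D$ whose fibers have length at most
$$\max\{\,|\partial D|+|\gamma|,\ W_\theta(D_1),\ W_\theta(D_2)\,\}+\epsilon .$$
The construction: from the terminal vertex whose fiber is $\partial D$, push in a thin collar of circles; then realize the cut by a ``$\theta$--move'', deforming the sweeping circle across one of the pieces until it runs along $\gamma$ (equivalently, growing a neck parallel to $\gamma$ until the circle pinches into a $\theta$--graph at a trivalent vertex), beyond which the tree carries the given maps of $D_1$ and $D_2$ on its two branches. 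The crucial feature is that $|\gamma|$ is paid \emph{once}, with coefficient $\le 1$, rather than being summed over the depth of the recursion; a naive ``finger'' realization would cost $2|\gamma|$ and would be insufficient, so the $\theta$--move must be done carefully (by sliding one piece's part of the sweeping circle onto $\gamma$ and controlling an auxiliary relative sweep-out of that piece, again via Besicovich's lemma).

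Granting the combination lemma, there are two steps. \emph{Reduction to short boundary.} If $|\partial D|>4\sqrt{A(D)}$, cut $\partial D$ into four arcs of equal length and apply Lemma~\ref{lem:besi} to obtain a simple geodesic arc $\gamma$ with $|\gamma|\le\sqrt{A(D)}$ joining two opposite arcs; it splits $D$ into sub-disks $D_1,D_2$ with $|\partial D_i|\le\tfrac34|\partial D|+\sqrt{A(D)}<|\partial D|$ and $A(D_1)+A(D_2)=A(D)$. Since the areas drop strictly, the inductive hypothesis applies to $D_1,D_2$; and since $|\partial D|$ is large the extra term $|\partial D|+|\gamma|\le|\partial D|+\sqrt{A(D)}$ supplied by the combination lemma is within what $B(D)$ permits. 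One concludes $W_\theta(D)\le B(D)$, so it remains to prove $W_\theta(D)\le(4+\tfrac{11\sqrt3}{4})\sqrt{A(D)}$ under the hypothesis $|\partial D|\le 4\sqrt{A(D)}$.

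\emph{The area induction.} Assuming $|\partial D|\le4\sqrt{A(D)}$, apply Lemma~\ref{lem:papa2} with $\lambda\nearrow 1/4$ and $\delta\to 0$: this cuts $D$, along a curve $\gamma$ with $|\gamma|\le 2\sqrt3\sqrt{A(D)}+\delta$, into two pieces of area essentially between $\tfrac14A(D)$ and $\tfrac34A(D)$. Feeding these pieces --- of area $\le\tfrac34A(D)$ and boundary length $\le|\partial D|+|\gamma|\le(4+2\sqrt3)\sqrt{A(D)}+\delta$ --- into the inductive hypothesis and then into the combination lemma, the resulting bound on $W_\theta(D)$ is the maximum of $|\partial D|+|\gamma|$, of $(|\partial D|+|\gamma|)+\sqrt{\tfrac34A(D)}$, and of $(4+\tfrac{11\sqrt3}{4})\sqrt{\tfrac34A(D)}$. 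Each of these is $\le(4+\tfrac{11\sqrt3}{4})\sqrt{A(D)}$: the first two because $4+2\sqrt3+\tfrac{\sqrt3}{2}=4+\tfrac{5\sqrt3}{2}\le4+\tfrac{11\sqrt3}{4}$, the last because $(4+\tfrac{11\sqrt3}{4})\tfrac{\sqrt3}{2}=2\sqrt3+\tfrac{33}{8}<4+\tfrac{11\sqrt3}{4}$. Letting $\epsilon,\delta\to0$ closes the induction; the constant $4+\tfrac{11\sqrt3}{4}$ is precisely what is needed for this system of inequalities --- assembled from $|\partial D|\le4\sqrt{A(D)}$, the subdivision bound $2\sqrt3\sqrt{A(D)}$, the area ratio $3/4$, and the finer estimates from the combination lemma --- to close up.

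The heart of the matter, and the main obstacle, is making the combination lemma honest with coefficient essentially $1$ in front of $|\gamma|$: this is what forces the delicate ``sliding'' version of the $\theta$--move and the control of the auxiliary relative sweep-out, and it is also where the interior-sub-disk case (complementary piece an annulus) must be treated --- the annulus is opened into a disk along a short arc joining its two boundary circles, but keeping lengths under control there is subtle and contributes to the final constant. Once that is in place, the rest --- existence and genericity of the cutting curves, a transversality/smoothing argument making the maps genuine tree maps satisfying (W.1)--(W.4), the termination of the recursion (areas shrink by the factor $3/4$), and the passage to the infimum in $\epsilon,\delta$ --- is routine.
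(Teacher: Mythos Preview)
Your overall architecture matches the paper's: Besicovich reduction to short boundary, then an area induction driven by Lemma~\ref{lem:papa2}, with a combination/gluing device to reassemble the pieces. The arc-cut case is handled correctly, and your numerics there are right: it yields $4+\tfrac{5\sqrt3}{2}=4+\tfrac{10\sqrt3}{4}$, which is indeed below the target constant.

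The genuine gap is the annulus case, which you flag as ``subtle'' but do not resolve. When the curve produced by Lemma~\ref{lem:papa2} misses $\partial D$, the complement of $D'_0$ is an annulus $\mathcal{A}$, and your proposal is to open $\mathcal{A}$ into a disk along ``a short arc joining its two boundary circles''. But there is no a priori bound on the height $h(\mathcal{A})$: the area of $\mathcal{A}$ is at most $\tfrac34 A(D)$, yet $h(\mathcal{A})$ can be arbitrarily large, so the opening arc need not be short and the resulting disk has uncontrolled perimeter. Feeding that into the inductive hypothesis gives nothing. The paper handles this with an extra idea you are missing: a \emph{stack decomposition} of $\mathcal{A}$. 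By the coarea formula, if $h(\mathcal{A})$ is large then some level set of the distance to one boundary component is short (length $\lesssim A(\mathcal{A})/h(\mathcal{A})$); iterating, one cuts $\mathcal{A}$ into a stack of thin sub-annuli $\mathcal{A}_1,\dots,\mathcal{A}_k$ with $h(\mathcal{A}_i)\le \tfrac{\sqrt{1-\lambda}}{2\sqrt3}\sqrt{A(\mathcal{A})}$ and intermediate curves of length $\le \tfrac{2\sqrt3}{\sqrt{1-\lambda}}\sqrt{A(\mathcal{A})}$. Each $\mathcal{A}_i$ is then opened along a minimizing arc of length $h(\mathcal{A}_i)$ into a disk of controlled perimeter, and a pair-of-pants gluing (mapping to a tripod) reattaches the pieces in sequence. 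This is exactly what produces the extra $\tfrac{\sqrt3}{4}$ in the final constant: the $2h(\mathcal{A}_i)$ contribution to the opened-disk perimeter limits to $\tfrac{\sqrt3}{4}\sqrt{A(D)}$, pushing $4+\tfrac{10\sqrt3}{4}$ up to $4+\tfrac{11\sqrt3}{4}$. Without this mechanism your induction does not close in the annulus case, and your assertion that the constant is ``precisely what is needed'' for the system you wrote down is not correct --- your displayed inequalities only force $4+\tfrac{10\sqrt3}{4}$.

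A smaller point: in the Besicovich reduction you invoke ``since the areas drop strictly, the inductive hypothesis applies''. A single Besicovich cut does not reduce area by a definite factor, so an induction on area alone is not well-founded there. The paper separates this into its own induction on the integer $n(D')$ measuring how far $|\partial D'|/\sqrt{A(D')}$ exceeds $4$; the key monotone quantity under Besicovich cuts is the perimeter (or this ratio), not the area.
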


We will prove this theorem in section \ref{sec:disk}. This is straightforward to check that it implies Theorem \ref{th:maindisk}. Observe that it also implies the following statement, of which Theorem \ref{th:mainsphere} is  a direct consequence.

\begin{corollary}\label{th:sphere}
Let $M$ be a Riemannian two-sphere. Then 
$$
W_\theta(M)\leq \left(2\sqrt{3}+{33 \over 8}\right) \, \sqrt{A(M)}.
$$
\end{corollary}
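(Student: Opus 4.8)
The plan is to deduce Corollary \ref{th:sphere} from Theorem \ref{th:disk} by cutting the sphere $M$ into two disks along a short curve, and then bounding the $\theta$-width of $M$ in terms of the $\theta$-widths of the two pieces. Concretely, I would first apply Papasoglu's Lemma \ref{lem:papa1}: for any $\delta>0$ there is a simple closed curve $\gamma$ of length at most $2\sqrt{3}\sqrt{A(M)}+\delta$ splitting $M$ into two disks $D_1$ and $D_2$, each of area at least $A(M)/4$, hence each of area at most $3A(M)/4$. Both $D_i$ share the common boundary $\gamma$, so $|\partial D_i| = |\gamma| \le 2\sqrt{3}\sqrt{A(M)}+\delta$ and $\sqrt{A(D_i)}\le \tfrac{\sqrt{3}}{2}\sqrt{A(M)}$.

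Next I would feed each $D_i$ into Theorem \ref{th:disk}. The two quantities inside the max become $|\partial D_i|+\sqrt{A(D_i)} \le 2\sqrt{3}\sqrt{A(M)}+\delta+\tfrac{\sqrt3}{2}\sqrt{A(M)} = \tfrac{5\sqrt3}{2}\sqrt{A(M)}+\delta$ and $\left(4+\tfrac{11\sqrt3}{4}\right)\sqrt{A(D_i)} \le \left(4+\tfrac{11\sqrt3}{4}\right)\tfrac{\sqrt3}{2}\sqrt{A(M)} = \left(2\sqrt3+\tfrac{33}{8}\right)\sqrt{A(M)}$. Since the second term dominates the first (as $2\sqrt3+33/8 > 5\sqrt3/2$), we get $W_\theta(D_i) \le \left(2\sqrt3+\tfrac{33}{8}\right)\sqrt{A(M)}$ for $i=1,2$. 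The point of the funny constant $4+\tfrac{11\sqrt3}{4}$ in Theorem \ref{th:disk} is precisely that $\tfrac{\sqrt3}{2}$ of it equals $2\sqrt3+\tfrac{33}{8}$, so no loss occurs here.

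The remaining step is to assemble the two trivalent-tree maps $f_i : D_i \to T_i$ (realizing the $\theta$-widths up to a tiny error) into a single map $f : M \to T$ satisfying (W.1)--(W.4) with the same length bound. The idea is to glue $T_1$ and $T_2$ along the terminal vertices corresponding to $\partial D_1 = \gamma = \partial D_2$: identify those two terminal vertices (or rather join them by an edge and then contract it), producing a trivalent tree $T$, and let $f$ agree with $f_i$ on $D_i$. Fibers over interior points of edges of $T_i$ remain circles, fibers over trivalent vertices remain $\theta$-figures, and the fiber over the gluing point is exactly $\gamma$, a circle; no new vertices of degree $>3$ are created. Thus every fiber of $f$ is a fiber of one of the $f_i$ or equals $\gamma$, and all have length at most $\left(2\sqrt3+\tfrac{33}{8}\right)\sqrt{A(M)} + \epsilon$ for arbitrarily small $\epsilon$ (absorbing both $\delta$ and the infimum slack). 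Letting $\epsilon\to 0$ gives the stated bound.

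The main obstacle is the gluing/assembly step: one must check carefully that the combinatorics of joining two trivalent trees along a boundary-fiber vertex yields again a trivalent tree with the required fiber types, in particular that the preimage conditions (W.1)--(W.3) are preserved at the junction and that continuity holds across $\gamma$. This is a routine but slightly delicate topological verification; everything else is bookkeeping with the area and length inequalities. (Theorem \ref{th:mainsphere} then follows by normalizing $A(M)\le 1$.)
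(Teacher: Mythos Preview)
Your proposal is correct and follows essentially the same approach as the paper: cut $M$ via Papasoglu's Lemma into two disks of area at most $\tfrac{3}{4}A(M)$, apply Theorem~\ref{th:disk} to each, and then observe that $W_\theta(M)\le\max\{W_\theta(D_1),W_\theta(D_2)\}$ by gluing the two tree-maps along the terminal vertices corresponding to the common boundary $\gamma$. The paper treats this last gluing step as ``straightforward to check,'' so your more careful discussion of the assembly is a welcome elaboration rather than a deviation.
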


\begin{proof}[Proof of Corollary \ref{th:sphere}]
 Let $M$ be a Riemannian two-sphere. 
First we use Papasoglu's result (Lemma \ref{lem:papa1}) to divide $M$ into two disks $D_1$ and $D_2$ of area at least $A(M)/4$ by a simple close curve of length at most $3\sqrt{3}\sqrt{A(M)}$. Now for each subdisk we have the following bound according to Theorem \ref{th:disk}:
\begin{eqnarray*}
W_\theta(D_i)& \leq &\max \left\{\vert \partial D_i \vert+\sqrt{A(D_i)} , \left(4+{11\sqrt{3}\over 4}\right) \, \sqrt{A(D_i)} \right\}\\
& \leq& \left(2\sqrt{3}+{33 \over 8}\right) \, \sqrt{A(M)}
\end{eqnarray*}
as $A(D_i) \leq {3 \over 4} \, A(M)$ for $i=1,2$ and $\vert \partial D_1 \vert = \vert \partial D_2 \vert \leq 3\, \sqrt{3}\sqrt{A(M)}$.
It is straightforward to check that
$$
W_\theta(M)\leq \max \left\{W_\theta(D_1),W_\theta(D_2)\right\}\leq  \left(2\sqrt{3}+{33 \over 8}\right) \, \sqrt{A(M)}.
$$
\end{proof}

\subsection{Existence of short closed geodesics.} It is classical to derive for spheres the existence of short closed geodesics from bounds on the $\theta$-width. In particular,

\begin{theorem}\label{th:sys}\hfill

\medskip

\noindent 1) Let $M$ be a Riemannian two-sphere with area $1$. 
Then it carries a closed geodesic of length at most $8/\sqrt{3}+11/2\simeq 10.1$.\\

\noindent 2) Let $M$ be a Finsler reversible two-sphere with Holmes-Thompson area $1$.
Then it carries a closed geodesic of length at most $\sqrt{\pi/2} \, (8/\sqrt{3}+11/2)\simeq 12.7$.\\

\noindent 3) Let $M$ be a Finsler (eventually non-reversible) two-sphere with Holmes-Thompson area $1$. 
Then it carries a closed geodesic of length at most $\sqrt{3\pi} \, (8/\sqrt{3}+11/2)\simeq 31.1$.
\end{theorem}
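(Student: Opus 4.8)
The plan is to derive the geodesic bounds directly from the width estimate in Corollary \ref{th:sphere} via a standard curve-shortening argument. First I would normalize the area to $1$ so that Corollary \ref{th:sphere} gives a continuous map $f\colon M\to T$ onto a trivalent tree with all fibers of length at most $L:=2\sqrt{3}+33/8$, where the generic fiber is a circle and the exceptional fibers are points or $\theta$-graphs. The key observation is that such a map furnishes a sweep-out of $M$ by closed curves: restricting $f$ to a path in $T$ joining two terminal vertices (or more precisely running over the edges of $T$ and gluing at trivalent vertices) exhibits the whole sphere as swept out by loops, each of length at most $L$, and this sweep-out is homotopically nontrivial in the space of loops since it starts and ends at point-fibers. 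By the Birkhoff curve-shortening process (or the min-max argument for the width of $S^2$), a nontrivial sweep-out by curves of length at most $L$ produces a closed geodesic of length at most $L$ — here one must be slightly careful because $L$ bounds fiber lengths but the relevant min-max quantity is the width, i.e. the maximal length over a sweep-out, so the geodesic has length at most $\max$ of the fiber lengths, namely $L$. This would already give a bound $\simeq 7.6$, which is better than the claimed $\simeq 10.1$; so I expect the actual statement uses a cruder but more robust version — presumably one cannot directly sweep out across a $\theta$-fiber, and one has to cut the sphere along such a fiber, run the argument on each piece, and pay an extra factor.

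More carefully, I would handle the $\theta$-vertices as follows. At a trivalent vertex of $T$ the fiber is a $\theta$-graph, which is the union of three arcs sharing two endpoints, and its total length is at most $L$. Passing through such a vertex, the circle fibers on one incoming edge merge (via the $\theta$) into the circle fibers on the two outgoing edges. To obtain an honest sweep-out by \emph{circles} one replaces the $\theta$-fiber by, say, the two loops formed by pairs of its arcs; each such loop has length at most $L$ as well (being a sub-union of the $\theta$), but to interpolate continuously from the incoming circle to each outgoing circle one concatenates with arcs of the $\theta$, and the worst intermediate curve has length bounded by something like $\frac{3}{2}L$ or $L$ plus the length of one arc. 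Tracking the exact combinatorics of this interpolation is what produces the constant $8/\sqrt{3}+11/2$ rather than $2\sqrt{3}+33/8$: writing $2\sqrt{3}+33/8 = 2\sqrt{3}+33/8$ and $8/\sqrt{3}+11/2 = \tfrac{4}{3}\cdot 2\sqrt{3} + \tfrac{4}{3}\cdot\tfrac{33}{8}$, we see $8/\sqrt{3}+11/2 = \tfrac{4}{3}(2\sqrt{3}+33/8)$, so the argument loses exactly a factor $4/3$ when turning a $\theta$-width sweep-out into a sweep-out by closed curves suitable for Birkhoff's process. So the real content is: a map as in the definition of $W_\theta$ with fiber bound $L$ yields a genuine $1$-parameter sweep-out of $S^2$ by closed curves of length at most $\tfrac{4}{3}L$, hence a closed geodesic of that length.

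With part 1) in hand, parts 2) and 3) follow by the standard comparison between Finsler and Riemannian metrics. For a reversible Finsler metric one uses the fact (due to, e.g., Durán, or the Holmes--Thompson volume comparison) that a reversible Finsler metric $F$ is bilipschitz-comparable to an auxiliary Riemannian metric $g$ — concretely the John ellipsoid field — with $\sqrt{\pi/2}\,\ell_g \geq \ell_F \geq \ell_g$ type bounds and $A_{HT}^F \geq A_g$ (up to the right constants), so that applying part 1) to $g$ and translating back costs a factor $\sqrt{\pi/2}$, giving $\sqrt{\pi/2}(8/\sqrt{3}+11/2)\simeq 12.7$. For the non-reversible case one symmetrizes $F$ to $\bar F(v) = \tfrac12(F(v)+F(-v))$, which is reversible and has geodesic length within a factor $3$ (this is the sharp comparison used in \cite{ABT13}) of $F$-length, with the Holmes--Thompson area changing by a controlled factor; combining the reversible bound with this factor $3$ (more precisely the factor $\sqrt{3\pi}$ after accounting for both the area normalization and the length distortion) yields $\sqrt{3\pi}(8/\sqrt{3}+11/2)\simeq 31.1$. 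The main obstacle, and the only genuinely delicate point, is the first paragraph's claim: making the passage from a $\theta$-width map to an admissible Birkhoff sweep-out completely rigorous — controlling the topology of the intermediate curves near the trivalent vertices and terminal vertices so that the sweep-out is continuous in the $W^{1,2}$ or $C^0$ topology on loops and is noncontractible in $\Lambda S^2$ — and checking that the constant that drops out is exactly $4/3$; everything after that is bookkeeping with known comparison inequalities.
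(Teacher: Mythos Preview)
Your outline is correct and follows the paper's strategy: prove $\sys(M)\le\tfrac43\,W_\theta(M)$ and then invoke the Finsler--Riemannian comparison from \cite{ABT13} (which supplies exactly the factors $\sqrt{\pi/2}$ and $\sqrt{6}$ you arrive at). The implementation differs in one substantive respect. The paper does not attempt to build a sweep-out by \emph{single} closed curves; instead it works in the one-cycle space $\mathcal{Z}_1(M;\Z)$ and uses Almgren's isomorphism $\pi_1(\mathcal{Z}_1(M;\Z),\{0\})\cong H_2(M;\Z)$. Arguing by contradiction (so that Birkhoff shortening contracts every loop of length $<\sys$), it associates to each edge $e$ and each trivalent vertex $v$ of $T$ a loop $f_e$, $f_v$ in $\mathcal{Z}_1$, and checks that $[S^2]=\sum_e[f_e]+\sum_v[f_v]$; hence some $f_e$ or $f_v$ is homologically nontrivial and Pitts' minimax gives the contradiction. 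At a vertex with $\theta$-fiber made of arcs $\alpha_1,\alpha_2,\alpha_3$ ordered by length, the critical one-cycle is $-\beta_{12}^{\,t}+\beta_{13}^{\,t}$, a formal \emph{sum} of two Birkhoff-shrinking loops, whose mass is at most $|\beta_{12}|+|\beta_{13}|=2|\alpha_1|+|\alpha_2|+|\alpha_3|\le \tfrac43 L$ since $|\alpha_1|\le L/3$. This is precisely your factor $4/3$, and the one-cycle formalism is what cleanly absorbs the branching that forces your vague ``concatenation'' step. Your single-curve sweep-out could presumably be pushed through, but it would need this or an equivalent device at the trivalent vertices; the paper's route via $\mathcal{Z}_1$ is the standard way to make exactly that step rigorous.
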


\begin{proof} It follows from \cite[section 4.4]{ABT13} that 
\begin{itemize} 
\item if any Riemannian sphere $M$ with unit area satisfies $W_\theta(M)\leq C$, then any reversible Finsler sphere $M'$ with unit Holmes-Thompson area satisfies $W_\theta(M')\leq \sqrt{\pi /2} \, C$;
\item if any reversible Finsler sphere $M$ with unit Holmes-Thompson area satisfies $W_\theta(M)\leq C$, then any Finsler sphere $M''$ with unit Holmes-Thompson area satisfies $W_\theta(M')\leq \sqrt{6} \,C$.
\end{itemize}
Now fix a Finsler sphere $M$. We denote by $\sys(M)$ the systole of $M$ defined as the length of a shortest closed geodesic.
According to Corollary \ref{th:sphere} it remains to prove that 
$$
\sys(M)\leq {4\over3} \, W_\theta(M).
$$
\\

The existence of a closed geodesic on $M$ can be proved through a minimax argument on the one-cycle space~$\mathcal{Z}_{1}(M;\Z)$. We refer the reader to \cite{BS10} and the references therein for additional information. Loosely speaking, this space arising from geometric measure theory is made of multiple curves (unions of oriented loops) endowed with some special topology.  This space permits to define a minimax process on the Finsler sphere~$(M,F)$ using F.~Almgren's isomorphism between the relative fundamental group $\pi_{1}(\mathcal{Z}_{1}(M;\Z),\{0\})$ and the second homology group $H_{2}(M;\Z)\simeq \Z$. From a result of J.~Pitts, the minimax quantity
$$
 \inf_{(z_{t})} \sup_{0 \leq t \leq 1} \vert z_t\vert
$$
where $(z_{t})$ runs over the families of one-cycles inducing a non-trivial element of \linebreak $\pi_{1}(\mathcal{Z}_{1}(M;\Z),\{0\})$ bounds from above the systole.
\\

We argue by contradiction. Suppose that $\sys(M)> {4 \over 3} \,W_\theta(M)$. Fix $\epsilon>0$ such that $\sys(M)>{4 \over 3} \,W_\theta(M)+\epsilon$. By definition there exists a continuous map $f$ from $M$ to a trivalent tree $T$ satisfying (W.1-4) with length $L=W_\theta(M)+\epsilon$. 

Let $v$ be a trivalent vertex. Its preimage denoted by $\theta(v)$ is made of  three disjoint oriented arcs $\alpha_1$, $\alpha_2$ and $\alpha_3$  with the same endpoints ordered such that $\vert \alpha_1 \vert \leq \vert \alpha_2 \vert  \leq \vert \alpha_3 \vert$. Denote by $\beta_{ij}$ for $1\leq i < j\leq 3$ the concatenation of the oriented arcs $\alpha_i$ with $-\alpha_j$. 
As $\sys(M)>\vert \beta_{ij}\vert$
we can continuously contract each of the $\beta_{ij}$ to a point curve through a homotopy by using a Birkhoff  process, see \cite{Cr88}. We denote by $\{\beta_{ij}^t\}_{t \in [0,1]}$ this homotopy with the convention that $\beta_{ij}^0=\beta_{ij}$. We define an element  of $\pi_{1}(\mathcal{Z}_{1}(M;\Z),\{0\})$ by
$$
f_v: t\in [0,1] \to \left\{\begin{array}{cc}
      &    -\beta_{12}^{1-2t} + \beta_{13}^{1-2t} \, \text{for} \, t \in [0,1/2];\\
      & \\
      &   \beta_{23}^{2t-1}  \hfill \, \text{for} \, t \in [1/2,1].
\end{array}\right.
$$
This gives rise to an element $[f_v] \in H_2(M,\Z)$  such that $\vert f_v(t)\vert \leq {4 \over 3} \, W_\theta(M)+\epsilon$ for any $t \in [0,1]$.

Now fix an edge $e=[v_0,v_1]\simeq [0,1]$ which is not terminal. We denote by $\alpha_t$ the preimage of an interior point of $e$ corresponding to the parameter $t \in ]0,1[$ and orient them in a coherent way. For $i=0,1$ denote by $\alpha_i$ the oriented  curve obtained as the limit of the curves $\alpha_t$ when $t \to i$.
The curve $\alpha_i$ is a simple closed curve contained in $\theta(v_i)$. As before we can contract $\alpha_i$ to a point through an homotopy $\{\alpha_i^t\}_{t \in [0,1]}$. 
We define an element  of $\pi_{1}(\mathcal{Z}_{1}(M;\Z),\{0\})$ by
$$
f_e: t\in [0,1] \to \left\{\begin{array}{cc}
      &    \alpha_0^{1-3t} \hfill  \text{for} \, t \in [0,1/3];\\
      & \\
      &    \alpha_{3t-1} \hfill  \, \text{for} \, t \in [1/3,2/3];\\
      & \\
      &   \alpha_1^{3t-2}  \hfill \, \text{for} \, t \in [2/3,1].
\end{array}\right.
$$
This gives rise to an element $[f_e] \in H_2(M,\Z)$  such that $\vert f_e(t)\vert \leq W_\theta(M)+\epsilon$  for any $t \in [0,1]$.

Finally fix a terminal edge $e=[v_0,v_1]\simeq [0,1]$ with the terminal vertex corresponding to $0$. With the same notations as above, the curve $\alpha_0$ is reduced to a point curve. We define an element  of $\pi_{1}(\mathcal{Z}_{1}(M;\Z),\{0\})$ by
$$
f_e: t\in [0,1] \to \left\{\begin{array}{cc}
      &  \alpha_{2t} \hfill  \, \text{for} \, t \in [0,1/2];\\
      & \\
      &    \alpha_1^{2t-1} \hfill  \, \text{for} \, t \in [1/2,1].
\end{array}\right.
$$
This gives rise to an element $[f_e] \in H_2,(M,\Z)$ such that $\vert f_e(t)\vert \leq W_\theta(M)+\epsilon$ for any $t \in [0,1]$.\\

It is possible to choose all the orientations in such a way that
$$
[S^2]=\sum_{e \in E(T)} [f_e]+\sum_{v \in V(T)} [f_v].
$$ 
Here $E(T)$ and $V(T)$ denote respectively the set of edges and the set of vertices of $T$.
It implies that there exists an edge $e$ such that $[f_e]\neq 0$ or a vertex $v$ such that $[f_v]\neq 0$. According to the minimax principle on the one-cycle space, we conclude that
$$
\sys(M) \leq {4 \over 3} \, W_\theta(M)+\epsilon
$$
which is a contradiction.
\end{proof}

\section{$\theta$-width of a Riemannian disk}\label{sec:disk}

In this section we prove Theorem \ref{th:disk}. For this we adapt the strategy of the proof of \cite[Theorem 1.6]{LNR12} to control our invariant $W_\theta$.

\subsection{Reduction to the short boundary case}

\begin{lemma}\label{lem:short}
Let $D$ be a Riemannian two-disk and $C\geq0$. Suppose that there exists $\eta>0$ such that for any subdisk $D' \subset D$ for which 
$$
\vert \partial D' \vert <  (4+\eta) \, \sqrt{A(D')}
$$ 
we have 
$$
W_\theta(D')\leq (1+\eta) \, \max\left\{ \vert \partial D'\vert+\sqrt{A(D')}, C \, \sqrt{A(D')} \right\}.
$$
 Then for any subdisk $D'\subset D$ we have that 
$$
W_\theta(D')\leq (1+\eta) \, \max \left\{\vert \partial D'\vert+\sqrt{A(D')} ,C \, \sqrt{A(D')}\right\}.
$$ 
\end{lemma}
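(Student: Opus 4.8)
The plan is to reduce an arbitrary subdisk of $D$ to finitely many subdisks with short boundary — to which the hypothesis applies directly — by iterating the Besicovich subdivision of the remark following Lemma~\ref{lem:besi}, and then to glue the resulting maps to trees back together. So fix a subdisk $D'\subset D$. If $|\partial D'|<(4+\eta)\sqrt{A(D')}$ there is nothing to prove. Otherwise $|\partial D'|>4\sqrt{A(D')}$, so dividing $\partial D'$ into four equal arcs and applying Lemma~\ref{lem:besi} produces a simple geodesic arc $\gamma$ with $|\gamma|\leq\sqrt{A(D')}$ cutting $D'$ into two subdisks $D_1',D_2'$; as each $D_i'$ is bounded by $\gamma$ together with at most three quarters of $\partial D'$,
$$
|\partial D_i'|\ \leq\ \tfrac34\,|\partial D'|+|\gamma|\ \leq\ \Bigl(\tfrac34+\tfrac1{4+\eta}\Bigr)|\partial D'|\ =\ \rho\,|\partial D'|,\qquad \rho:=\tfrac34+\tfrac1{4+\eta}<1,
$$
and $A(D_i')\leq A(D')$. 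Iterating — subdividing a subdisk precisely when its boundary is long — I grow a rooted binary tree of subdisks of $D'$ in which a node at depth $k$, whose proper ancestors are all long-bounded, satisfies $|\partial(\text{node})|\leq\rho^{k}|\partial D'|$ and $A(\text{node})\leq A(D')$; moreover the arc $\gamma_{\widetilde D}$ introduced when a node $\widetilde D$ is cut has $|\gamma_{\widetilde D}|\leq\sqrt{A(\widetilde D)}\leq\sqrt{A(D')}$ while $|\partial\widetilde D|\leq|\partial D'|$.

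This process need not terminate (a branch may shrink towards a single point while staying long-bounded, as for a sharp thin triangle), so I truncate it at a depth $N$ depending only on $\eta$, chosen so that $\rho^{N}$ is as small as the comparison below requires; by K\"onig's lemma the truncated tree is finite. Let $D_1'',\dots,D_m''$ be its leaves, so $D'=\bigcup_j D_j''$ with the pieces glued to one another along the cut arcs in a tree pattern. If a leaf $D_j''$ has short boundary then the hypothesis gives $W_\theta(D_j'')\leq(1+\eta)\max\{|\partial D_j''|+\sqrt{A(D_j'')},\,C\sqrt{A(D_j'')}\}$, which is at most $(1+\eta)\max\{|\partial D'|+\sqrt{A(D')},\,C\sqrt{A(D')}\}$ since $|\partial D_j''|\leq|\partial D'|$ and $A(D_j'')\leq A(D')$. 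If instead $D_j''$ is a depth-$N$ leaf that is still long-bounded, then $|\partial D_j''|\leq\rho^{N}|\partial D'|$ and $\sqrt{A(D_j'')}\leq\rho^{N}|\partial D'|/(4+\eta)$, so a crude a priori bound $W_\theta(D_j'')\leq K(|\partial D_j''|+\sqrt{A(D_j'')})$ with $K$ an absolute constant — which follows readily from the Morse function of \cite{Li13} — gives $W_\theta(D_j'')\leq\tfrac54K\rho^{N}|\partial D'|$, hence $W_\theta(D_j'')\leq(1+\eta)|\partial D'|$ for $N$ large, again at most $(1+\eta)\max\{|\partial D'|+\sqrt{A(D')},\,C\sqrt{A(D')}\}$.

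It remains to reassemble a map $D'\to T$ out of the $f_j\colon D_j''\to T_j$, one arc at a time. Given two subdisks $D_a,D_b$ carrying maps to trivalent trees and sharing a single boundary arc $\gamma$, I identify the terminal vertices lying over $\partial D_a$ and over $\partial D_b$ into one vertex, sprout from it a new edge ending at a new terminal vertex $v_0$, set $f$ to be $f_a$ on $D_a$ and $f_b$ on $D_b$, and let the preimage of $v_0$ be $\partial(D_a\cup D_b)$; the preimage of the identified vertex is then $\partial D_a\cup\partial D_b=\partial(D_a\cup D_b)\cup\gamma$, a $\theta$-figure, so (W.1)--(W.4) persist. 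Performing these gluings along the (tree-shaped) adjacency pattern of the pieces — equivalently, undoing the cuts in reverse order — the only new fibers created are $\theta$-figures $\partial\widetilde D\cup\gamma_{\widetilde D}$, of length $\leq|\partial\widetilde D|+\sqrt{A(\widetilde D)}\leq|\partial D'|+\sqrt{A(D')}$, together with curves parallel to $\partial D'$ of length $\leq|\partial D'|$; every remaining fiber is a fiber of some $f_j$. Therefore
$$
W_\theta(D')\ \leq\ \max\Bigl\{\,\max_{j} W_\theta(D_j''),\ |\partial D'|+\sqrt{A(D')}\,\Bigr\}\ \leq\ (1+\eta)\max\bigl\{|\partial D'|+\sqrt{A(D')},\,C\sqrt{A(D')}\bigr\},
$$
which is the claim.

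The step I expect to be most delicate is the bookkeeping of the subdivision: reconciling the possible non-termination of the Besicovich process with the need for a finite decomposition — handled by the depth truncation, which is precisely why one wants the crude a priori bound on the few leftover pieces — and checking that the inductive gluing of tree maps is legitimate and genuinely produces $\theta$-figure fibers of the stated length (rather than, say, figure-eights). The numerical estimates themselves are just the geometric series governed by $\rho<1$.
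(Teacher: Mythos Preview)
Your overall plan---iterate Besicovich cuts until the boundary is short, then glue the resulting tree-maps---is exactly the paper's strategy. Two places diverge from the paper, and one of them is a genuine gap.

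\textbf{The gluing is not well-defined as written.} You set $f=f_a$ on all of $D_a$ and $f=f_b$ on all of $D_b$, then declare the preimage of the new terminal vertex $v_0$ to be $\partial(D_a\cup D_b)$. These two prescriptions are incompatible: under $f_a,f_b$ every point already lands in $T_a\cup_v T_b$, so nothing maps to the new edge, the preimage of $v_0$ is empty, and $\partial(D_a\cup D_b)$ is sent to the trivalent vertex $v$, violating (W.1). The paper's repair (Claim~\ref{cl:1}) is to first peel off an $\epsilon$-collar of $\partial(D_a\cup D_b)$: apply $f_a,f_b$ only on the slightly shrunk pieces, and on the collar map to the new edge via level sets of the distance to the boundary. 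Then the preimage of $v$ is a small perturbation of $\partial D_a\cup\partial D_b$, a $\theta$-figure of length $|\partial\widetilde D|+|\gamma_{\widetilde D}|+o(\epsilon)$, and $v_0$ has preimage exactly $\partial(D_a\cup D_b)$. Without this collar your glued map does not satisfy (W.1)--(W.3), which is precisely the worry you flag at the end.

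\textbf{The termination device is different and leans on outside input.} The paper does not truncate: it introduces the integer
\[
n(D')=\min\Bigl\{n:\ |\partial D'|<\bigl(4+\eta(4/3)^n\bigr)\sqrt{A(D')}\Bigr\}
\]
and runs the induction on $n(D')$, arguing that one Besicovich cut lowers this index, so every branch reaches $n=0$ (the short-boundary case) in finitely many steps with no leftover leaves. Your route instead truncates at depth $N$ and disposes of the remaining long-bounded leaves by importing the bound of \cite{Li13}. That is logically permissible but not self-contained, and the sentence ``follows readily from the Morse function of \cite{Li13}'' hides real work: the fibers of a Morse function over saddle values are figure-eights, not $\theta$-graphs, so one must pass to the Reeb graph and perturb near the singular fibers before concluding anything about $W_\theta$. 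If you want to keep the truncation while avoiding \cite{Li13}, observe that a depth-$N$ long-bounded leaf has both $|\partial D''_j|$ and $\sqrt{A(D''_j)}$ bounded by $\rho^N|\partial D'|$, hence is metrically tiny; the bilipschitz-to-Euclidean sweepout used in the proof of Lemma~\ref{lem:small} (the part that does not itself invoke Lemma~\ref{lem:short}) then bounds $W_\theta(D''_j)$ directly.
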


We will use in the sequel this lemma with the constant $C=0$ (small area case) and $C=C_{\lambda,\eta}:=4+2\eta +2\sqrt{3}+{1-\lambda\over \sqrt{3}(1-2\eta)}+\sqrt{1-\lambda}$ for $0<\lambda<{1\over 4}$ and $\eta >0$ (general case).

\begin{proof}
For any subdisk $D' \subset D$ we define $n(D')$ to be the smaller integer $n$ such that
$$
\vert \partial D'\vert < \left(4+\eta\, \left({4 \over 3}\right)^n\right)\sqrt{A(D')}.
$$

Let $D'$ be a subdisk such that $n(D')= 0$. Equivalently we have that $\vert \partial D' \vert < (4+\eta)\sqrt{A(D')}$ and so we are done by assumption.\\

Now fix an integer $n$ and suppose that for any subdisk $D' \subset D$ such that $n(D')\leq n-1$ we have proven that   
$$
W_\theta(D')\leq (1+\eta) \, \max \left\{ \vert \partial D'\vert +\sqrt{A(D')}, C\, \sqrt{A(D')}\right\}.
$$
Let $D'\subset D$ be a subdisk with $n(D')= n$. In particular $\vert \partial D' \vert > 4 \sqrt{A(D')}$ so we can subdivide $D'$ into two subdisks $D'_1$ and $D'_2$ of smaller perimeters using a Besicovich's cut $\alpha$ of length $\sqrt{A(D')}$ (Lemma \ref{lem:besi}). More precisely,

\begin{eqnarray*}
\vert \partial D'_i \vert & \leq &{3\over 4} \vert \partial D' \vert +\sqrt{A(D')}\\
& < & {3 \over 4} \left(\eta\, \left({4 \over 3}\right)^n+4\right)\sqrt{A(D')}+\sqrt{A(D')}\\
& < & \left(\eta\, \left({4 \over 3}\right)^{n-1}+4\right)\sqrt{A(D')}
\end{eqnarray*}
so $n(D'_i)\leq n-1$.\\

Let $\epsilon>0$ be small enough so that all points of $D'$ at a distance at least $\epsilon$ from $\partial D'$ form a subdisk denoted by $D''\subset D'$. The subdisk $D''$ is itself subdivided by the Besicovich's cut $\alpha$ into two subdisks $D''_i \subset D'_i$ for $i=1,2$. By considering $\epsilon$ smaller if necessary, we can suppose that $\partial D''_i$ is sufficiently closed to $\partial D'_i$ so that $\vert \partial D''_i\vert < \vert \partial D' \vert$ and $n(D''_i)\leq n-1$. In particular  $W_\theta(D''_i)\leq (1+\eta) \, \max \left\{\vert \partial D''_i\vert +\sqrt{A(D''_i)} , C\, \sqrt{A(D''_i)}\right\}$ for $i=1,2$ according to the induction assumption, which implies that
$$
W_\theta(D''_i)\leq (1+\eta) \, \max \left\{\vert \partial D'\vert +\sqrt{A(D')} , C\, \sqrt{A(D')}\right\}.
$$

\begin{claim}\label{cl:1}
$$
W_\theta(D')\leq \max \left\{\vert \partial D'\vert+\sqrt{A(D')}+o(\epsilon), W_\theta(D''_1), W_\theta(D''_2)\right\}.
$$
\end{claim}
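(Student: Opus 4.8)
The plan is to build a map $f':D'\to T'$ by gluing together the given maps on the two pieces $D''_1$ and $D''_2$ (which exist by the induction hypothesis) with a product structure on the thin collar $\overline{D'\setminus D''}$. First I would set up the collar: choosing $\epsilon$ small, the set $D'\setminus D''$ is (essentially) a product $\partial D'\times[0,\epsilon]$, foliated by curves parallel to $\partial D'$; however this collar meets the Besicovich cut $\alpha$, so it is cut by $\alpha$ into two collar pieces, one in each $D'_i$. The idea is to map $D'$ onto a tree $T'$ obtained from the disjoint union of the trees $T''_1,T''_2$ (associated to $D''_1,D''_2$) by attaching a new terminal edge $e_0=[w_0,w_1]$ whose terminal vertex $w_0$ has preimage $\partial D'$, and identifying $w_1$ with a suitable point where the two subtrees get glued.

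The key steps, in order. (1) Describe the retraction of the collar onto $\partial D''$: on $\partial D'\times[0,\epsilon]$ send $(x,s)$ to a point of the new edge $e_0$ depending only on $s$, so that the fiber over an interior point of $e_0$ is the parallel curve $\partial D'\times\{s\}$, which has length $|\partial D'|+o(\epsilon)$ by the collar estimate, and the fiber over $w_0$ is $\partial D'$ itself. (2) At the inner end $s=\epsilon$ of the collar, the parallel curve $\partial D''$ is cut by $\alpha$ into the two arcs forming $\partial D''_1$ and $\partial D''_2$; these two boundary components are terminal-vertex fibers in the maps $f''_i:D''_i\to T''_i$. So I glue $e_0$ to $T''_1$ and $T''_2$ by inserting a trivalent vertex $v_1$: near $v_1$ the fiber is the $\theta$-graph obtained from $\partial D''$ together with the cut arc $\alpha\cap D''$, and the three arcs $\alpha_1,\alpha_2,\alpha_3$ of that $\theta$ are: the two halves of $\partial D''$ on either side of $\alpha$, plus $\alpha$ itself. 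I must check that moving across $v_1$ one recovers exactly the maps $f''_1$ and $f''_2$ on the two subdisks, so that conditions (W.1)--(W.3) hold on all of $T'$. (3) Estimate fiber lengths: a fiber of $f'$ is either a collar curve (length $|\partial D'|+\sqrt{A(D')}+o(\epsilon)$ — here the $\sqrt{A(D')}$ is the worst-case contribution from a fiber that is a collar curve union a sub-arc of $\alpha$, since $|\alpha|\le\sqrt{A(D')}$), or the $\theta$ over $v_1$ (whose length is at most $|\partial D''|+|\alpha|\le|\partial D'|+\sqrt{A(D')}+o(\epsilon)$), or a fiber entirely inside some $D''_i$ (length $\le W_\theta(D''_i)$). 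Taking the max gives the claimed bound.

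I expect the main obstacle to be step (2): verifying that the three maps can genuinely be glued into a single continuous map satisfying (W.1)--(W.4), in particular arranging the trivalent vertex $v_1$ and the $\theta$-fiber there so that the combinatorics of the tree is consistent (the edge $e_0$ enters $v_1$, and the two edges of $T''_i$ incident to the terminal vertices carrying $\partial D''_i$ also enter $v_1$, giving a genuine trivalent vertex), and checking that the $\theta$-graph over $v_1$ is precisely $\partial D''\cup(\alpha\cap D'')$ so that its length is controlled by $|\partial D''|+|\alpha|$. One should also be a little careful that a collar curve $\partial D'\times\{s\}$ that crosses $\alpha$ must be closed off using a sub-arc of $\alpha$, which is why the bound is $|\partial D'|+\sqrt{A(D')}+o(\epsilon)$ rather than just $|\partial D'|+o(\epsilon)$; and that $D''_i\subset D'_i$ with $\partial D''_i$ made of a sub-arc of $\partial D'$ and a sub-arc of $\alpha$, so the "$\sqrt{A(D')}$" slack is exactly what is needed to absorb the cut. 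The rest is routine: once the map is constructed and the fibers are described, the length estimate is immediate from $|\alpha|=\sqrt{A(D')}$ (Lemma \ref{lem:besi}) and the collar estimate $|\partial D'\times\{s\}|=|\partial D'|+o(\epsilon)$.
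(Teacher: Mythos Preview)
Your construction is correct and is essentially identical to the paper's: glue the maps $f_i:D''_i\to T_i$ along a new edge $e$ by identifying the two terminal vertices corresponding to $\partial D''_1,\partial D''_2$ with the inner endpoint of $e$, and foliate the collar $D'\setminus D''$ by level curves $\gamma_t$ of the distance function to $\partial D'$, mapping each $\gamma_t$ to the point $t\in e$.

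One minor confusion to clean up: in your step (3) you say a collar fiber may be ``a collar curve union a sub-arc of~$\alpha$'' and that this is where the $\sqrt{A(D')}$ term comes from. That is not the case. The collar fibers are just the full circles $\gamma_t$ (your $\partial D'\times\{s\}$), which are already closed and have length $|\partial D'|+o(\epsilon)$ --- no piece of $\alpha$ is needed to close them. The $\sqrt{A(D')}$ term enters \emph{only} through the fiber over the trivalent vertex $v_1$, which is the $\theta$-graph $\partial D''_1\cup\partial D''_2=\partial D''\cup(\alpha\cap D'')$ of length at most $|\partial D''|+|\alpha|\le |\partial D'|+\sqrt{A(D')}+o(\epsilon)$. (Relatedly, $\partial D''_i$ is an arc of $\partial D''$, not of $\partial D'$, together with $\alpha\cap D''$.) Once you drop that spurious remark, your argument and the paper's coincide.
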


\noindent $\rhd$ Indeed for any $\delta>0$ and $i=1,2$ let $f_i : D''_i \to T_i$ be a continuous map to a trivalent tree $T_i$ satisfying conditions (W.1-4) with length strictly less than $W_\theta(D''_i)+\delta$. Denote by $v_i$ the terminal vertex of $T_i$ corresponding to the boundary $\partial D''_i$. Consider a new edge $e \simeq [0,1]$ and define a new trivalent tree $T$ obtained from $T_1$, $T_2$ and $e$  by identifiying $v_1$, $v_2$ and the vertex of $e$ corresponding to $\{1\}$ into the same vertex denoted by $v$. The trees $T_1$ and $T_2$ can be thought as subgraphs of $T$. Denote by $\{\gamma_t\}_{t \in [0,1]}$ a monotone isotopy from $\partial D'$ to $\partial D''$ form by level sets of the distance function to $\partial D'$. It satisfies that
$$
\vert \gamma_t \vert \leq \vert \partial D'\vert +o(\epsilon).
$$

We define a new map $f:D' \to T$ as follows:
\begin{itemize}
\item $f(x)=f_i(x)$ if $x \in D''_i\setminus \partial D''_i$;
\item $f(x)=v$ if $x \in \partial D''_1 \cup \partial D''_2 $;
\item $f(x)=t$ if $x \in \gamma_t$ for $t \in [0,1]$.  
\end{itemize}
By construction we have that the length of the preimages is always strictly less than  
$$
\max\left\{\vert \partial D'\vert+\sqrt{A(D')}+o(\epsilon),W_\theta(D''_1)+\delta,W_\theta(D''_2)+\delta\right\}.
$$
It is easy to check that $f:D' \to T$ satisfies the conditions (W.1-3) which concludes the proof by letting $\delta \to 0$. $\lhd$\\

It implies that 
$$
W_\theta(D')\leq (1+\eta) \, \max \left\{ \vert \partial D'\vert +\sqrt{A(D')} , C\, \sqrt{A(D')}\right\}
$$
by letting $\epsilon \to 0$ and we are done by induction.
\end{proof}

\subsection{Small area case.}

\begin{lemma}\label{lem:small}
Let $D$ be a Riemannian two-disk and $\eta>0$. There exists $\epsilon>0$ such that any subdisk $D' \subset D$ with $A(D')\leq \epsilon$ satisfies
$$
W_\theta(D')\leq (1+\eta) \, (\vert \partial D'\vert + \sqrt{A(D')}).
$$
\end{lemma}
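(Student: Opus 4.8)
The plan is to reduce the bound on the $\theta$-width of a small-area subdisk to a statement about the $\theta$-width of a subdisk whose boundary is \emph{short} relative to its area, and then iterate a subdivision argument that kills off the remaining small area. Concretely, given $\eta>0$, the first step is to apply Lemma~\ref{lem:short} with $C=0$: this reduces the claim to showing that there is $\epsilon>0$ so that every subdisk $D'\subset D$ with $A(D')\leq\epsilon$ \emph{and} $\vert\partial D'\vert<(4+\eta)\sqrt{A(D')}$ satisfies $W_\theta(D')\leq(1+\eta)(\vert\partial D'\vert+\sqrt{A(D')})$. So from now on one works only with such ``fat'' subdisks of tiny area.

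For a fat subdisk of area at most $\epsilon$, the strategy is to contract $\partial D'$ inward through level curves of the distance-to-boundary function (exactly the monotone isotopy $\{\gamma_t\}$ used in Claim~\ref{cl:1}), peeling off a collar of controlled area, and then to subdivide the remaining inner subdisk using Papasoglu's disk-subdivision lemma (Lemma~\ref{lem:papa2}): for $\lambda$ a fixed small constant one gets a subdisk $D_1'$ with $\lambda A\leq A(D_1')\leq(1-\lambda)A$ whose new boundary arc has length at most $2\sqrt{3}\sqrt{A(D')}+\delta$. Writing $D'$ as the union of $D_1'$, its complementary subdisk $D_2'$, the cutting arc, and the collar, one builds a map to a trivalent tree from maps on $D_1'$ and $D_2'$ by the same gluing construction as in Claim~\ref{cl:1} (a new edge joining the two terminal vertices coming from the cut, with the collar mapped to that edge). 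This gives a recursion $W_\theta(D')\leq\max\{\vert\partial D'\vert+\text{(collar term)}+2\sqrt{3}\sqrt{A(D')}+\delta,\ W_\theta(D_1'),\ W_\theta(D_2')\}$, and since $A(D_i')\leq(1-\lambda)A(D')$ each recursion step strictly decreases the area by a definite factor, so after finitely many steps every piece has area below the threshold at which its boundary is automatically long enough, or one reaches pieces whose $W_\theta$ is controlled by $\vert\partial(\cdot)\vert$ alone; summing a geometric series in $\sqrt{A}$ and choosing $\epsilon$ (hence the total number of steps, hence the accumulated geometric factor) small enough forces the final bound to be at most $(1+\eta)(\vert\partial D'\vert+\sqrt{A(D')})$.

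The main obstacle, and the place requiring genuine care rather than routine bookkeeping, is controlling the \emph{boundary length} of all the intermediate subdisks produced by the recursion. Papasoglu's lemma bounds only the length of the \emph{new} part $\vert\partial D_i'\setminus\partial D'\vert$; the inherited part of the boundary can be as long as $\vert\partial D'\vert$, and after many iterations these new arcs of size $\sim\sqrt{A}$ could in principle pile up. The point is that the areas shrink geometrically, so the new arcs have lengths $\lesssim\sqrt{(1-\lambda)^k}\,\sqrt{A(D')}$ and their sum is a convergent geometric series whose total is $O(\sqrt{A(D')})$ with an explicit constant; one must check that for $A(D')\leq\epsilon$ small this entire accumulated length, together with all the collar error terms $o(\epsilon)$ and the slack $\delta$, is absorbed into the factor $(1+\eta)$ multiplying $\vert\partial D'\vert+\sqrt{A(D')}$. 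A secondary technical point is that one must keep the intermediate subdisks genuinely embedded (the collar-peeling trick from Claim~\ref{cl:1}, taking $\epsilon$ smaller as needed, handles this), and verify that the glued map satisfies (W.1)--(W.4) at each stage, which is exactly the verification already carried out in Claim~\ref{cl:1} and can be invoked verbatim.
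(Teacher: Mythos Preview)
Your reduction via Lemma~\ref{lem:short} with $C=0$ is correct and matches the paper's first step. The rest of the argument, however, has a genuine gap.

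The core problem is that your recursive bound is \emph{scale-invariant}. Running your Papasoglu subdivision repeatedly yields an estimate of the form
\[
W_\theta(D') \;\le\; |\partial D'| \;+\; 2\sqrt{3}\,\sqrt{A(D')}\bigl(1+\sqrt{1-\lambda}+(1-\lambda)+\cdots\bigr) \;+\; \text{(errors)}
\;=\; |\partial D'| + \frac{2\sqrt{3}}{1-\sqrt{1-\lambda}}\sqrt{A(D')} + \cdots,
\]
and the coefficient $\tfrac{2\sqrt{3}}{1-\sqrt{1-\lambda}}$ is a fixed number (at least about $25$ for $\lambda<\tfrac14$), completely independent of $\epsilon$. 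Both sides of the desired inequality scale like $\sqrt{A(D')}$, so shrinking $\epsilon$ buys you nothing: the sentence ``choosing $\epsilon$ small enough forces the final bound to be at most $(1+\eta)(|\partial D'|+\sqrt{A(D')})$'' is simply false. Relatedly, your recursion has no base case: you are trying to prove the small-area lemma, so there is no smaller threshold to fall back on, and ``pieces whose $W_\theta$ is controlled by $|\partial(\cdot)|$ alone'' never actually appear.

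What the paper does instead is use the one genuinely non-scale-invariant fact available: for $r$ small, every ball of radius $r$ in $D$ is $(1+O(r))$-bilipschitz to a convex Euclidean domain. Once $|\partial D'|<(4+\eta)\sqrt{\epsilon}$, the disk $D'$ sits inside such a ball, hence is $(1+\eta)$-bilipschitz to a planar region $U$; sweeping $U$ by parallel lines gives $W_\theta(U)\le|\partial U|$ directly, with no recursion at all. That local-flatness input is precisely what your approach is missing.
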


\begin{proof}
According to Lemma \ref{lem:short} with $C=0$ it is enough to prove the lemma for subdisks $D'$ with 
$$
\vert \partial D' \vert < (4+\eta) \, \sqrt{\epsilon}.
$$
As observed in the proof of \cite[Lemma 2.3]{LNR12}, for $r$ small enough every ball of radius $r$ is $(1+O(r))$-bilipschitz homeomorphic to a convex subset of $\R^2$. Hence for $\epsilon$ small enough the condition $\vert \partial D' \vert < (4+\eta) \, \sqrt{\epsilon}$ ensures that $D'$ is $(1+\eta)$-bilipschitz to a subset $U \subset \R^2$ with smooth boundary. It is easy to continuously contract the boundary of $U$ into a point through a continuous one-parameter family of closed curves of $U$ with decreasing length. For this consider a supporting line $\ell$ of $U$. We linearly translate this line in the inner orthogonal direction until we sweep out $U$ and denote by $\left\{\ell_t\right\}_{t \in [0,1]}$ this family of translated lines (with the convention that $\ell_0=\ell$). For each $t \in [0,1]$ the intersection $\ell_t \cap U$ consists of a finite number of disjoint segments. By transversality we can assume moreover that this number of disjoint segments changes at each step by at most $1$. Consider the family of closed curves $\gamma_t$ defined as the boundary of the union $\cup _{s \in [t,1]} U \cap \ell_t$. This is a continuous one-parameter family of closed curves (with eventually multiple connected components) of $U$ with decreasing length that contracts $\partial U$ to a point. The curves involved in this family are not disjoint, but it can be done by slightly pertubing the family in the neighborhood of $\partial U$ without significantly increasing their length. Finally it is classic to derive from this family a map $f : U \to T$ with $T$ a trivalent tree and satisfying conditions (W.1-4) with $L$ as close as wanted from $\vert \partial U\vert$, compare with \cite[p.128]{Gr83}.
In particular $W_\theta(U) \leq \vert \partial U \vert$ which in turn implies that $W_\theta(D') \leq (1+\eta) \, \vert \partial D' \vert$.
\end{proof}

\subsection{General case.} Let $D$ be a Riemannian disk. Fix $\eta >0$ and $0<\lambda<{1 \over 4}$ and define
$$
C_{\lambda,\eta}=4+2\eta +2\sqrt{3}+{1-\lambda\over \sqrt{3}(1-2\eta)}+\sqrt{1-\lambda}.
$$ 

We will argue by induction and prove that for any subdisk $D' \subset D$ we have
$$
W_\theta(D') \leq (1+\eta) \, \max\left\{ \vert \partial D' \vert+ \sqrt{A(D')}, C_{\lambda,\eta} \, \sqrt{A(D')} \right\}.
$$
In particular it implies the conclusion of Theorem \ref{th:disk} by letting $\eta \to 0$ and $\lambda \to {1 \over 4}$.  According to lemma \ref{lem:short} with $C=C_{\lambda,\eta}$ it is enough to estimate the $\theta$-width of $D'$ under the stronger assumption that $\vert \partial D' \vert < (4+\eta) \, \sqrt{A(D')}$.\\

Let $\epsilon>0$ such that the conclusion of lemma \ref{lem:small} holds. For any subdisk $D' \subset D$ we define $m(D')$ to be the smaller integer $m$ such that 
$$
A(D') \leq\epsilon \, \left( {1 \over 1-\lambda} \right)^m.
$$
\\

\noindent Let $D'$ be a subdisk such that $m(D')= 0$. Equivalently  $A(D') \leq \epsilon$ and we are done according to lemma \ref{lem:small}.\\

\noindent Now fix a positive integer $m$ and suppose that for any subdisk $D' \subset D$ with $m(D')\leq m-1$ we have  proven that 
$$
W_\theta(D')\leq (1+\eta) \, \max \left\{ \vert \partial D' \vert+ \sqrt{A(D')}, C_{\lambda,\eta} \, \sqrt{A(D')}\right\}.
$$  
 Let $D'\subset D$ be a subdisk with $m(D')=m$.

By Lemma \ref{lem:papa2} there exists a subdisk $D'_0 \subset D'$ such that 
\begin{itemize}
\item $\lambda  A(D')\leq A(D'_0)\leq (1-\lambda) A(D')$,
\item $\vert \partial D'_0 \setminus \partial D' \vert \leq (2\sqrt{3}+\eta) \, \sqrt{A(D')}$.
\end{itemize}

\bigskip

\noindent {\it First case.} If $\partial D'_0 \cap \partial D'\neq \emptyset$, then $D'$ decomposes into an union of subdisks $D'_0,\ldots,D'_k$ with disjoint interiors such that $A(D'_i)\leq (1-\lambda) A(D')$ for $i=0,\ldots,k$. In particular for each $i$ we have 
$$
W_\theta(D'_i)\leq (1+\eta) \, \max \left\{\vert \partial D'_i\vert +\sqrt{A(D'_i)} , C_{\lambda,\eta} \,   \sqrt{A(D'_i)}\right\}
$$
as $m(D'_i)\leq m-1$ and the inductive assumption applies.

Using a similar argument to that of Claim \ref{cl:1}, it is straightforward to check that
$$
W_\theta(D') \leq (1+\eta) \max \left\{\vert \partial D'\vert + \vert \partial D'_0 \setminus \partial D' \vert+\sqrt{1-\lambda}\, \sqrt{A(D')}, C_{\lambda,\eta} \,   \sqrt{1-\lambda}\sqrt{A(D')}\right\}
$$
as $\vert \partial D'_i \vert \leq \vert \partial D'\vert + \vert \partial D'_0 \setminus \partial D' \vert$ and $A(D'_i)\leq (1-\lambda) A(D')$  for $i=0,\ldots,k$.

It implies that
\begin{eqnarray*}
W_\theta(D') &\leq&(1+\eta)\max \left\{(4+2\eta + 2\sqrt{3}+\sqrt{1-\lambda}) \, \sqrt{A(D')},C_{\lambda,\eta}\, \sqrt{A(D')}\right\}\\
&\leq&(1+\eta)\max \left\{\vert \partial D'\vert + \sqrt{A(D')} ,C_{\lambda,\eta} \, \sqrt{A(D')}\right\}
\end{eqnarray*}
as claimed.

\bigskip

\noindent {\it Second case.} If $\partial D'_0 \cap \partial D'= \emptyset$, then $D'$ decomposes into the union of the disk $D'_0$ and an annulus ${\mathcal A}$. Recall that
\begin{itemize}
\item $\vert \partial D' \vert < (4+\eta) \, \sqrt{A(D')},$
\item $\vert \partial D'_0 \vert \leq (2\sqrt{3}+\eta) \, \sqrt{A(D')},$
\item $A({\mathcal A})\leq (1-\lambda) A(D')$,
\item $A(D'_0)\leq (1-\lambda) A(D').$
\end{itemize}
In particular $m(D'_0)\leq m-1$ so that 
\begin{eqnarray*}
W_\theta(D'_0)&\leq& \max \left\{ \vert \partial D'_0\vert +\, \sqrt{A(D'_0)}, C_{\lambda,\eta} \, \sqrt{A(D')}\right\}\\
& \leq&   C_{\lambda,\eta} \, \sqrt{A(D')}.
\end{eqnarray*}
by the inductive assumption.\\

Denote by $h({\mathcal A})$ the height of the annulus, that is the distance between its two boundary curves. We say that ${\mathcal A}$ decomposes into a stack of annuli if there exist a finite number of annuli ${\mathcal A}_1,\ldots, {\mathcal A}_k$ with disjoint interiors such that ${\mathcal A}=\cup_{i=1}^k {\mathcal A}_i$ and  ${\mathcal A_i}\cap{\mathcal A_{i+1}}=\beta_i$ is a common boundary simple closed curve for $i=1,\ldots,k-1$. The following lemma will help us to estimate the $\theta$-width of $D'$.

\begin{lemma}
The Riemannian annulus ${\mathcal A}$ decomposes into a stack of annuli ${\mathcal A}_1,\ldots, {\mathcal A}_k$ such that 
$$
h(\A_i) \leq {\sqrt{1-\lambda}\over 2\sqrt{3}(1-2\eta)} \, \sqrt{A({\mathcal A})}
$$
for $i=1,\ldots,k$ and
$$
\vert \beta_i \vert \leq {2\sqrt{3} +\eta \over \sqrt{1-\lambda}} \, \sqrt{A({\mathcal A})}
$$
for $i=1,\ldots,k-1$.
\end{lemma}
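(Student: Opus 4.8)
The plan is to produce the stack of annuli by slicing $\mathcal A$ along level curves of the distance function to one of its boundary components, and to bound the lengths of the cut curves using Besicovich's lemma applied to the sub-annuli. Write $\partial \mathcal A = \partial D'_0 \sqcup c$, where $c \subset \partial D'$, and let $\rho(x) = d(x, \partial D'_0)$, so that $\rho$ ranges over $[0, h(\mathcal A)]$. Set $H = h(\mathcal A)$ and choose a large integer $N$; the idea is to cut $\mathcal A$ into the sub-annuli $\mathcal A^{(j)} = \{\, jH/N \le \rho \le (j+1)H/N \,\}$. Along the way each $\mathcal A^{(j)}$ is an annulus whose height is $H/N$, which can be made as small as we like. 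The cut curves $\beta_j = \{\rho = jH/N\}$ are not a priori short, so these coarse slices are not yet the ones we want; instead I would \emph{merge} consecutive slices until the accumulated area forces a short separating curve to appear.

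More precisely, here is the key mechanism. Suppose at some stage we are looking at the sub-annulus $\mathcal A' = \{\, a \le \rho \le b \,\}$ of height $b - a$. If $b - a$ is already at most $\frac{\sqrt{1-\lambda}}{2\sqrt 3 (1-2\eta)}\sqrt{A(\mathcal A)}$ and we have reached the far boundary $c$, we stop. Otherwise, consider the rectangle-like piece obtained by cutting $\mathcal A'$ along a shortest path $\sigma$ joining its two boundary circles; this yields a Riemannian square (topological disk with four marked boundary arcs: two copies of $\sigma$, one inner circle, one outer circle). By Besicovich's lemma there is a simple geodesic path crossing between the two ``$\rho = \text{const}$'' sides of length at most $\sqrt{A(\mathcal A')} \le \sqrt{A(\mathcal A)}$; the real point, though, is the coarea/pigeonhole estimate: among the level curves $\{\rho = t\}$ for $t$ in an interval of length $\ell$, coarea gives $\int |\{\rho = t\}|\,dt \le A(\{a \le \rho \le a+\ell\}) \le A(\mathcal A)$, so some level curve has length at most $A(\mathcal A)/\ell$. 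Taking $\ell = \frac{\sqrt{1-\lambda}}{2\sqrt 3}\sqrt{A(\mathcal A)}$ (up to the $(1-2\eta)$ and $\eta$ fudge factors needed to absorb the non-smoothness of $\rho$ and to turn ``$\le$'' into a strict genuine simple closed curve via a small perturbation) produces a separating simple closed curve of length at most $\frac{2\sqrt 3}{\sqrt{1-\lambda}}\sqrt{A(\mathcal A)}$, which is exactly the claimed bound on $|\beta_i|$. Repeating this greedily from $\rho = 0$ outward, the height of each resulting piece $\mathcal A_i$ is at most $\ell \le \frac{\sqrt{1-\lambda}}{2\sqrt 3(1-2\eta)}\sqrt{A(\mathcal A)}$ by construction, the process terminates after finitely many steps since each step advances $\rho$ by a fixed positive amount $\ell$ (or else reaches $c$), and the cut curves $\beta_i = \{\rho = t_i\}$ are the desired short separating curves. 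One must also check that $\{\rho = t_i\}$, for generic $t_i$, is a single simple closed curve separating $\partial D'_0$ from $c$ (it carries the generator of $H_1(\mathcal A)$), discarding any contractible components, which only decreases length.

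The main obstacle I expect is \textbf{regularity of the distance function and genericity of the level sets}: $\rho$ is only Lipschitz, its level sets need not be embedded circles, and they may have several components. This is handled by the standard device of replacing $\rho$ by a smooth $(1+\eta')$-approximation (or working with ``generic'' levels via Sard-type arguments for Lipschitz functions on surfaces), perturbing the chosen level curve to a nearby smooth simple closed curve of comparable length, and keeping only the non-contractible component --- this is precisely where the factors $(1-2\eta)$ and $+\eta$ in the statement come from. A secondary subtlety is that the sub-annuli $\mathcal A'$ might degenerate (inner ``boundary'' collapsing), but since we always keep $\rho$ away from $0$ by the fixed gap $\ell$ this does not occur. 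Once these technical points are in place, the coarea pigeonhole and the greedy construction give the lemma directly.
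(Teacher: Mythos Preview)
Your proposal is essentially the paper's own argument: use level sets of the distance function to one boundary component, apply coarea to find a short essential level curve, smooth it and keep only the non-contractible component, then iterate. The detour through Besicovich and the ``large $N$'' slicing is extraneous (as you yourself note, ``the real point is the coarea/pigeonhole estimate''), but the core mechanism matches.

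One loose end worth tightening: your termination claim ``each step advances $\rho$ by a fixed positive amount $\ell$'' is not literally true, since the short level curve you find in $[t_{i-1}, t_{i-1}+\ell]$ could lie arbitrarily close to $t_{i-1}$. The paper handles this cleanly by applying coarea only over the \emph{middle} interval $[\eta\, h(\mathcal A'),\, (1-\eta)\,h(\mathcal A')]$ of the current sub-annulus $\mathcal A'$; this is exactly where the factor $(1-2\eta)$ in the height bound comes from, and it guarantees that each resulting piece has height at most $(1-\eta)$ times the previous one, so the recursion terminates. Your remark that the $(1-2\eta)$ and $+\eta$ factors ``absorb'' such issues is on the right track but should be made explicit in this way.
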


\begin{proof}
Suppose that 
$$
h({\mathcal A})>  {\sqrt{1-\lambda}\over 2\sqrt{3}(1-2\eta)} \, \sqrt{A({\mathcal A})}.
$$ 
Consider for every $0<t<h({\mathcal A})$ the $1$-cycle $c_t$ formed by points of ${\mathcal A}$ at distance $t$ of $\beta_0$. By the coarea formula
$$
A(\left\{c_t \mid t\in [\eta h({\mathcal A}),(1-\eta)h({\mathcal A})]\right\} =  \int_{\eta h({\mathcal A})}^{(1-\eta)h({\mathcal A})} \vert c_t \vert dt
\leq A({\mathcal A})
$$
so that there exists some $t \in [\eta h({\mathcal A}),(1-\eta)h({\mathcal A})]$ such that
$$
\vert c_t \vert \leq {2\sqrt{3} \over \sqrt{1-\lambda}} \, \sqrt{A({\mathcal A})}
$$
otherwise we derive a contradiction. The cycle $c_t$ can be approximated by a union of smooth closed simple curves with total length at most 
$$
{2\sqrt{3} +\eta \over \sqrt{1-\lambda}} \, \sqrt{A({\mathcal A})}.
$$
So ${\mathcal A}$ decomposes into a stack of two annuli ${\mathcal A}_1$ and ${\mathcal A}_2$ such that ${\mathcal A}_1 \cap {\mathcal A}_2$ is a simple closed curve of length at most $((2\sqrt{3}+\eta)/\sqrt{1-\lambda})  \,A({\mathcal A})$ and such that $h({\mathcal A}_i)\leq (1-\eta) h({\mathcal A})$ for $i=1,2$. By iterating this process we derive the Lemma.
\end{proof}

We suppose that the stack decomposition is ordered in such a way that $D'_0$ and ${\mathcal A}_1$ are adjacent.  In the sequel we will denote by $\beta_0$ the boundary curve of ${\mathcal A}$ corresponding to $\partial D'_0$ and by $\beta_k$ the one corresponding to $\partial D'$. Observe in particular that
$$
\vert \beta_i\vert \leq (2\sqrt{3}+\eta) \sqrt{A(D')}
$$
for $i=0,\ldots,k-1$ and that 
$$
\vert \beta_k\vert \leq (4+\eta) \sqrt{A(D')}.
$$

Now it remains to estimate the $\theta$-width of $D'$ using this stack decomposition.
For each annulus ${\mathcal A}_i$ of the decomposition choose a minimizing simple path $\alpha_i$ between its two boundary curves. Cutting then along the curve $\alpha_i$ yields to a disk we denote by $D'_i$ whose boundary consists  in the concatenation of $\beta_{i-1}$, a copy of $\alpha_i$, $\beta_i$ and another copy of $\alpha_i$.  Observe that
\begin{eqnarray*}
\vert \partial D'_i \vert&\leq &(4+\eta)\sqrt{A(D')} + (2\sqrt{3}+\eta)\sqrt{A(D')}+2\left({\sqrt{1-\lambda}\over 2\sqrt{3}(1-2\eta)}\right)\sqrt{A({\mathcal A}_i)}\\
&\leq &\left(4+2\eta +2\sqrt{3}+{1-\lambda\over \sqrt{3}(1-2\eta)}\right) \sqrt{A(D')}.
\end{eqnarray*}
As $A(D'_i)=A({\mathcal A}_i)\leq (1-\lambda) A(D')$ , we have $m(D'_i)\leq n-1$ for $i=1,\ldots,k$ so that 
\begin{eqnarray*}
W_\theta(D'_i)&\leq& \max \left\{ \vert \partial D'_i \vert +\, \sqrt{A(D'_i)}, C_{\lambda,\eta} \, \sqrt{A(D'_i)}\right\}\\
& \leq&  \max \left\{\left(4+2\eta +2\sqrt{3}+{1-\lambda\over \sqrt{3}(1-2\eta)}+\sqrt{1-\lambda}\right) \sqrt{A(D')},C_{\lambda,\eta}\,\sqrt{A(D')}\right\}\\
&\leq & C_{\lambda,\eta} \, \sqrt{A(D')}.
\end{eqnarray*}
by the inductive assumption. 

\begin{lemma}
For $i=1,\ldots,k$,
$$
W_\theta(D'_0 \cup \ldots \cup D'_i)\leq \max \left\{W_\theta(D'_0\cup\ldots \cup D'_{i-1}),W_\theta(D'_i)\right\}.
$$
\end{lemma}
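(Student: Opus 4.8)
The plan is to build a continuous map from the partial union $D'_0 \cup \ldots \cup D'_i$ to a trivalent tree by grafting a map realizing $W_\theta(D'_i)$ onto a map realizing $W_\theta(D'_0 \cup \ldots \cup D'_{i-1})$ along their common boundary, exactly as in the proof of Claim~\ref{cl:1}. First I would fix $\delta > 0$ and choose, by definition of the $\theta$-width, a continuous map $g : D'_0 \cup \ldots \cup D'_{i-1} \to S$ to a trivalent tree $S$ satisfying (W.1--4) with fiber length $< W_\theta(D'_0 \cup \ldots \cup D'_{i-1}) + \delta$, and likewise a map $h : D'_i \to T_i$ to a trivalent tree $T_i$ satisfying (W.1--4) with fiber length $< W_\theta(D'_i) + \delta$. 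The key topological observation is that $D'_0 \cup \ldots \cup D'_i$ is obtained from $D'_0 \cup \ldots \cup D'_{i-1}$ and $D'_i$ by gluing along a single common boundary circle, namely (a copy of) $\beta_i$, since the annuli are stacked and $D'_i$ is the $i$-th annulus cut along $\alpha_i$; so the gluing locus is one simple closed curve in the boundary of each piece.

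Next I would produce the tree $T$: let $v$ and $w$ be the terminal vertices of $S$ and $T_i$ corresponding to the glued boundary circle in each piece, introduce a new edge $e \simeq [0,1]$, and form $T$ by identifying $v$, $w$ and the endpoint $0$ of $e$ to a single vertex, so that $e$ becomes a terminal edge of $T$ attached at that vertex. Then I define $f : D'_0 \cup \ldots \cup D'_i \to T$ piecewise: $f = g$ on the interior of $D'_0 \cup \ldots \cup D'_{i-1}$, $f = h$ on the interior of $D'_i$, and on a thin collar neighborhood of the gluing circle I interpolate along $e$ using the level sets of the distance to that circle, sending the circle itself to the identified vertex. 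One must check that $f$ is continuous and satisfies (W.1--3): away from the collar this is inherited from $g$ and $h$; at the identified vertex the preimage is the gluing circle together with the two $\theta$-graphs (or circles) limiting onto it, but by choosing the collar thin the fiber over the vertex is just the single simple closed curve $\beta_i$ (the collar level sets are circles), so (W.2) holds there, while the preimages of trivalent vertices of $S$ and $T_i$ are still $\theta$-figures. The fibers of $f$ are: fibers of $g$, fibers of $h$, and the collar circles, whose length can be taken as close as we like to $|\beta_i|$ — and since $|\beta_i| \le W_\theta(D'_0 \cup \ldots \cup D'_{i-1})$ and $\le W_\theta(D'_i)$ (as both dominate the boundary length of their respective pieces, which includes $\beta_i$), the collar contributes nothing beyond the maximum. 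Hence every fiber of $f$ has length $< \max\{W_\theta(D'_0 \cup \ldots \cup D'_{i-1}), W_\theta(D'_i)\} + \delta$, and letting $\delta \to 0$ gives the claim.

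The main obstacle is the bookkeeping at the identified vertex $v \sim w \sim 0$: one has to be careful that the new map genuinely satisfies (W.1--3) there and does not accidentally create a fiber that is neither a circle, a point, nor a $\theta$-figure. The clean way around this is precisely the thin-collar trick used in Claim~\ref{cl:1} — slide the two pieces apart along the new edge $e$ so that the only fiber over the interior and endpoint of $e$ is a circle isotopic to $\beta_i$, and the $\theta$-structure stays entirely inside $S$ and $T_i$. A secondary but routine point is verifying the small perturbation of the collar level sets (to make them genuinely disjoint embedded circles of nearly the length of $\beta_i$), which is the same perturbation argument invoked in the proof of Lemma~\ref{lem:small}. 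Since all of this is a direct adaptation of Claim~\ref{cl:1}, I would state it as "a similar argument to that of Claim~\ref{cl:1}" and give only the description of the new tree and the piecewise map, as above.
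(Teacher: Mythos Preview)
Your approach has a genuine topological gap. You treat the attachment of $D'_i$ to $D'_0\cup\ldots\cup D'_{i-1}$ as a gluing along a single full boundary circle of each piece, as in Claim~\ref{cl:1}. But recall that $D'_i$ is the disk obtained by cutting the annulus $\mathcal{A}_i$ along $\alpha_i$: its boundary $\partial D'_i$ is the concatenation $\beta_{i-1}\cdot\alpha_i\cdot\beta_i\cdot\alpha_i^{-1}$. The curve $\beta_{i-1}$ is only a \emph{subarc} of $\partial D'_i$, and forming $D'_0\cup\ldots\cup D'_i$ from the disjoint union requires two identifications: the subarc $\beta_{i-1}\subset\partial D'_i$ with $\partial(D'_0\cup\ldots\cup D'_{i-1})$, \emph{and} the two copies of $\alpha_i$ in $\partial D'_i$ with each other. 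In particular, the terminal vertex $w$ of $T_i$ that you want to use has preimage the \emph{entire} circle $\partial D'_i$, not $\beta_{i-1}$; after the identifications its preimage becomes the graph $\beta_{i-1}\cup\alpha_i\cup\beta_i$ in $D'_0\cup\ldots\cup D'_i$, which is neither a circle nor a $\theta$-figure. So the glued map you describe does not satisfy (W.2)--(W.3), and the collar of $\partial D'_i$ you want to foliate by circles is, after the identifications, a neighbourhood of this graph in $\mathcal{A}_i$ --- a pair of pants, not an annulus.

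This is exactly why the paper's argument is more elaborate than Claim~\ref{cl:1}. The paper takes small collar neighbourhoods $U_i$, $U'_i$ of both terminal vertices, observes that their combined preimage $V_i$ in $D'_0\cup\ldots\cup D'_i$ is a three-holed sphere with boundary curves $\gamma_{i1}$ (a small push-off of $\partial D'_i$ inside $D'_i$, bounding a disk in $\mathcal{A}_i$), $\gamma_{i2}$ (a push-off of $\beta_{i-1}$ inside the previous union), and $\beta_i$. The essential extra step (Claim~\ref{claim:2}) is to construct by hand a map $V_i\to Y$ to a tripod satisfying (W.1)--(W.4) with fibre length $\leq |\partial D'_i|+o(\epsilon)$; only then can one splice the three pieces (restriction of $f'_i$, map on $V_i$, restriction of $f_i$) into a map to the tree $T'_i\setminus U'_i$ glued to $Y$ glued to $T_i\setminus U_i$. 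Your write-up is missing this pair-of-pants/tripod construction, which is the heart of the lemma.
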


In particular 
$$
W_\theta(D')\leq \max \left\{W_\theta(D'_0),\ldots, W_\theta(D'_{k})\right\}\leq C_{\lambda,\eta}\, \sqrt{A(D)}
$$
which concludes the proof of Theorem \ref{th:disk}.

\begin{proof}
Fix $\delta>0$ and  $i \in \llbracket 1,k\rrbracket$. We can find a trivalent tree $T_i$ (resp. $T'_i$) together with a continuous map  $f_i:  D'_i \to T_i$ (resp. $f'_i :D'_0\cup \ldots \cup D'_{i-1} \to T'_i$) satisfying conditions (W.1-4) with associated length strictly less than $W_\theta(D'_i)+\delta$ (resp. $W_\theta(D'_0\cup \ldots \cup D'_{i-1})+\delta$). Let $e_i$ (resp. $e'_i$) denote the terminal edge of $T_i$ (resp. $T'_i$) for which the preimage of the terminal vertex is the boundary  $\partial D'_i$ (resp. $\beta_{i-1}=\partial (D'_0\cup \ldots \cup D'_{i-1})$). Suppose that $T_i$ and $T'_i$ are endowed with the metric with all edges of length $1$. In particular $e_i$ and $e'_i$ are isometric to the segment $[0,1]$. Fix $0<\epsilon<1$  and denote by $U_i\subset T_i$ the $\epsilon$-neighbourhood of the extremal vertex of $e_i \in T_i$ (resp.by $U'_i\subset T'_i$ the $\epsilon$-neighbourhood of the extremal vertex of $e'_i \in T'_i$). 
Denote by $V_i$ the closure of the union of the preimages $f_i^{-1}(U_i)$ and ${f'}_i^{-1}(U'_i)$ which is isomorphic to a sphere with three boundary components represented in Fig. 1:

\begin{itemize}
\item the curve denoted $\gamma_{i1}$ obtained as the preimage by $f_i$ of the new extremal vertex of $T_i\setminus U_i$ corresponding to the truncated edge $e_i$,
\item the curve denoted $\gamma_{i2}$ obtained as the preimage by $f'_i$ of the new extremal vertex of $T'_i\setminus U'_i$ corresponding to the truncated edge $e'_i$,
\item the boundary curve $\beta_i$.
\end{itemize}

\medskip
\begin{center}
    \includegraphics{WidthFig1.pdf} \\
    \smallskip
    \begin{small}
        { {\bf Fig.1. The annulus ${\mathcal A}$ near $D'_i$.} }
    \end{small}
\end{center}
\medskip

Observe that $\gamma_{i1}$ is a small deformation of  $\partial D'_i$ viewed as a curve in $D'_i$, while $\gamma_{i2}$ is a small deformation of  $\beta_{i-1} \subset D'_0 \cup \ldots \cup D'_{i-1}$. In particular
$$
\vert \gamma_{i1}\vert =\vert \partial D'_i\vert +o(\epsilon)
$$
and $$
\vert \gamma_{i2}\vert =\vert \beta_{i-1}\vert +o(\epsilon).
$$
We will define a new map from $D'_0\cup \ldots \cup D'_i$ to a trivalent tree by using the restriction of the previous maps $f_i$ and $f'_i$ on the complementary regions of $V_i$, and completing it on $V_i$ using the following map.

\begin{claim}\label{claim:2}
There exists a map $f:V_i \to Y$ where $Y$ is a tripod (a trivalent tree with only three edges) and satisfying the conditions (W.1-4) with associated length 
$$
\vert \partial D'_i \vert+o(\epsilon).
$$
\end{claim}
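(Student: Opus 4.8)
The plan is to exhibit the map directly. Recall that $V_i$ is a pair of pants (sphere with three boundary components $\gamma_{i1}$, $\gamma_{i2}$ and $\beta_i$), sitting inside the annulus $\mathcal A$ in the concrete way depicted in Fig.~1: it is obtained from the sub-annulus $\mathcal A_i$ by cutting along the arc $\alpha_i$ and re-attaching small collar neighbourhoods of $\gamma_{i1}$ and $\gamma_{i2}$ coming from $D'_i$ and from $D'_0\cup\ldots\cup D'_{i-1}$. Equivalently, $V_i$ is a small thickening of the wedge of the curve $\beta_i$ with the arc $\alpha_i$, where $\alpha_i$ joins $\gamma_{i1}$ to $\gamma_{i2}$. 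The target tripod $Y$ will have its trivalent (central) vertex with fibre the $\theta$-graph formed by two parallel copies of $\alpha_i$ together with $\beta_i$, and its three terminal edges will be swept by level curves degenerating respectively to $\gamma_{i1}$, to $\gamma_{i2}$, and to the core circle $\beta_i$.

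First I would sweep out $V_i$ by the level sets of the distance function $d(\cdot,\beta_i)$ inside $V_i$, restricted to the region between $\beta_i$ and the $\theta$-graph. Each such level set, for the parameter ranging over the edge of $Y$ terminating at the vertex whose fibre contains $\beta_i$, is a circle isotopic to $\beta_i$, of length at most $|\beta_i|+o(\epsilon)$; these are controlled by $|\partial D'_i|$ since $\beta_i$ is one of the boundary arcs making up $\partial D'_i$ and $|\beta_i|\le|\partial D'_i|$. On the ``far side'' of the $\theta$-graph, $V_i$ is cut by the two parallel copies of $\alpha_i$ into two rectangles, one a collar of $\gamma_{i1}$ and the other a collar of $\gamma_{i2}$; I sweep each of these by the distance function to its outer boundary curve, giving two families of arcs (with endpoints on the two copies of $\alpha_i$) that close up to circles parallel to $\gamma_{i1}$ and $\gamma_{i2}$ respectively. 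Concatenating these three sweep-outs across the $\theta$-graph produces a continuous map $f\colon V_i\to Y$ satisfying (W.1), (W.2) and (W.3): terminal fibres are $\gamma_{i1}$, $\gamma_{i2}$ and $\beta_i$ (or interior points in the limit), edge-interior fibres are circles, and the central fibre is a $\theta$. This is the same ``sweep a surface by distance level sets'' procedure used in the proof of Lemma \ref{lem:small} and in \cite[p.128]{Gr83}, and as there one may perturb slightly so that the fibres are genuinely disjoint without increasing lengths more than $o(\epsilon)$.

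Finally I would check (W.4), the length bound. Every edge-interior fibre is a circle parallel either to $\gamma_{i1}$, to $\gamma_{i2}$, or to $\beta_i$, obtained as a level set of a $1$-Lipschitz distance function inside $V_i$; since $V_i$ is an $\epsilon$-thin neighbourhood of the $\theta$-graph, each such circle has length at most $\max\{|\gamma_{i1}|,|\gamma_{i2}|,|\beta_i|\}+o(\epsilon)$. Now $|\beta_i|\le|\partial D'_i|$ trivially, $|\gamma_{i1}|=|\partial D'_i|+o(\epsilon)$, and $|\gamma_{i2}|=|\beta_{i-1}|+o(\epsilon)\le|\partial D'_i|+o(\epsilon)$ because $\beta_{i-1}$ is itself one of the boundary arcs of $D'_i$, so $|\beta_{i-1}|\le|\partial D'_i|$. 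The central $\theta$-fibre consists of $\beta_i$ and two copies of $\alpha_i$, and since $\alpha_i$ was chosen minimizing, $|\alpha_i|=h(\mathcal A_i)$, while $\beta_i$ and two copies of $\alpha_i$ together form part of $\partial D'_i$, so again the $\theta$-fibre has length $\le|\partial D'_i|+o(\epsilon)$. Hence all fibres have length at most $|\partial D'_i|+o(\epsilon)$, as claimed. The only delicate point — and the step I expect to require the most care — is arranging the three sweep-outs to match continuously along the $\theta$-graph and to have pairwise disjoint (rather than merely length-controlled) fibres; this is handled exactly as the corresponding perturbation in Lemma \ref{lem:small}, pushing the non-disjointness into a small neighbourhood of the $\theta$-graph at the cost of only $o(\epsilon)$ in length.
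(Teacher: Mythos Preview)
Your approach --- building the map by an explicit sweep-out of the thin pair of pants $V_i$ towards a tripod --- is exactly what the paper does; the paper's own proof consists of a single sentence pointing to Fig.~2, so you are in fact supplying more detail than the author. The length estimates you give (each fibre is parallel to one of $\gamma_{i1}$, $\gamma_{i2}$, $\beta_i$, hence of length at most $\vert\partial D'_i\vert+o(\epsilon)$) are correct.

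There is, however, one genuine slip in your description of the central $\theta$-fibre. You write that it is ``formed by two parallel copies of $\alpha_i$ together with $\beta_i$''. This cannot be right as stated: $\beta_i$ is one of the three boundary components of $V_i$, so by condition (W.1) it must be the full preimage of a \emph{terminal} vertex of $Y$, and therefore cannot lie in the preimage of the trivalent vertex. Moreover, two disjoint arcs attached to a circle at four distinct points is not homeomorphic to a $\theta$. The curve you want in the spine is $\beta_{i-1}$ (which is genuinely interior to $V_i$, having a collar on both sides), together with a single arc that runs from a point of $\beta_{i-1}$ down one side of $\alpha_i$, around a push-in of $\beta_i$, and back up the other side of $\alpha_i$ to a nearby point of $\beta_{i-1}$. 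That graph \emph{is} a $\theta$, its three loops are isotopic respectively to $\gamma_{i2}$, $\beta_i$ and $\gamma_{i1}$, and its total length is at most $\vert\beta_{i-1}\vert+2\vert\alpha_i\vert+\vert\beta_i\vert+o(\epsilon)=\vert\partial D'_i\vert+o(\epsilon)$. With this correction your sweep-out argument goes through verbatim.
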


\begin{proof}[Proof of Claim \ref{claim:2}]
The construction of the map is straightforward and is depicted in Fig. 2.
\end{proof}

\medskip
\begin{center}
    \includegraphics[scale=0.45]{WidthFig2.pdf} \\
    \smallskip
    \begin{small}
     { {\bf Fig.2. The map $f: V_i \to Y$.} }
    \end{small}
\end{center}
\medskip

Consider the trivalent tree $T''_i$ obtained from the disjoint union of $T_i\setminus U_i$, $T'_i\setminus U'_i$ and $Y$ after identification of the terminal vertices of $T_i \setminus U_i$ and $Y$ corresponding to $\gamma_{i1}$ and the one of $T'_i \setminus U'_i$ and $Y$ corresponding to $\gamma_{i2}$.  
We then define $f''_i : D'_0 \cup \ldots \cup D'_i \to T''_i$ as follows:
\begin{itemize}
\item $f''_i(x)=f_i(x)$ if $x \in D'_i \setminus V_i$;  
\item $f''_i(x)=f'_i(x)$ if $x \in D'_0\cup \ldots \cup D'_{i-1} \setminus V_i$;
\item $f''_i(x)=f(x)$ if $x \in V_i$.
\end{itemize}
By construction we have that the length of the preimages is always less than 
$$
\max\left\{W_\theta(D'_0\cup \ldots \cup D'_{i-1})+\delta,W_\theta(D'_i)+\delta, \vert \partial D'_i \vert+o(\epsilon)\right\}.
$$
This concludes the proof by letting $\epsilon \to 0$ and $\delta \to 0$ as $W_\theta(M)\geq \vert \partial M\vert$ for any Riemannian surface $M$.
\end{proof}


\begin{thebibliography}{99}

\bibitem[ABT13]{ABT13}
Alvarez Paiva, J.C., Balacheff, F. \& Tzanev, K. : {\em Isosystolic inequalities for optical hypersurfaces}. Preprint arXiv 1308.5522.


\bibitem[Ba04]{Ba04}
Balacheff, F.: {\em Sur des probl\`emes de la g\'eom\'etrie systolique}. S\'emin. Th\'eor. Spectr. G\'eom. {\bf 22}, Univ. Grenoble I, Saint-Martin-d'H\`eres, (2004), 71--82.

\bibitem[BP12]{BP12}
Balacheff, F. \& Parlier, H.: {\em Bers' constants for punctured spheres and hyperelliptic surfaces}. J. Top. Anal. {\bf 4} (2012), 271-296.

\bibitem[BPS12]{BPS12}
Balacheff, F., Parlier, H. \& Sabourau, S.: {\it Short loop
decompositions of surfaces}. Geom. Funct. Anal. {\bf 22} (2012), 37-73.


\bibitem[BS10]{BS10}
Balacheff, F. \& Sabourau, S.: {\em Diastolic inequalities and
isoperimetric inequalities on surfaces}. Ann. Scient. de l'E.N.S. {\bf 43} (2010), 579-605.

\bibitem[Be52]{Bes52}
Besicovich, A. S.: {\it On two problems of Loewner}. J. London Math. Soc. {\bf 27} (1952), 141Ð144.
 
 
\bibitem[Cr88]{Cr88} 
Croke, C.: {\em Area and the length of the shortest closed geodesic.}
J. Diff. Geom. \textbf{27} (1988),  1-21.

\bibitem[Gr83]{Gr83}
Gromov, M.: {\it Filling Riemannian manifolds}. 
J. Differential Geom. {\bf 18} (1983), 1Ð147.

\bibitem[Gu05]{Gu05}
Guth, L.: {\it Lipschitz maps from surfaces}. Geom. Funct. Anal. {\bf 15} (2005), 1052-1099.

\bibitem[LNR12]{LNR12}
Liokumovich, E., Nabutovsky, A. \& Rotman, R.: {\em Contracting the boundary of a Riemmanian $2$-disk}. Preprint  arXiv:1205.5474 (2012).

\bibitem[Li13]{Li13}
Liokumovich, E.: {\em Slicing up a $2$-sphere}. Preprint  arXiv:1401.0060 (2014).


\bibitem[Pa10]{Pap10}
Papasoglu, P.: {\it Cheeger constants of surfaces and isoperimetric inequalities}.  
Trans. Amer. Math. Soc. {\bf 361} (2009), 5139-5162. 

\bibitem[Ro05]{Ro05}
Rotman, R.: {\em The length of a shortest closed geodesic and the area of a $2$-dimensional sphere}. Proc. Amer. Math. Soc. {\bf 134} (2006), 3041-3047.


\end{thebibliography}
\end{document}